\newtheorem{thm}{Theorem}[section]
\newtheorem{pro}[thm]{Proposition}
\newtheorem{cor}[thm]{Corollary}
\newcommand{\Tee }{\mathcal T}
\newtheorem{dfn}{Definition}[section]
\newtheorem{qu}[thm]{Question}
\theoremstyle{definition}
\newtheorem{ex}[thm]{Example}
\theoremstyle{remark}
\begin{document}

\title[Spectral representations of topological groups and near-openly generated groups]
{Spectral representations of topological groups and near-openly generated groups}

\author{K. Kozlov}
\address{Department of General Topology and Geometry, Faculty of Mechanics and Mathematics, \\
Moscow State University, Moscow 119991, Russia}
\email{kkozlov@mech.math.msu.su}
\thanks{The first author was partially supported by RFBR, 17-51-18051 Bulg\_a}

\author{V. Valov}
\address{Department of Computer Science and Mathematics, Nipissing University,
100 College Drive, P.O. Box 5002, North Bay, ON, P1B 8L7, Canada}
\email{veskov@nipissingu.ca}
\thanks{The second author was partially supported by NSERC Grant 261914-13.}

\keywords{near-openly generated topological group, near-open homomorphism, $\omega$-narrow group, $\mathbb R$-factorizable group, spectrum, }

\subjclass[2010]{Primary 22A25; Secondary 54H11}



\begin{abstract}
Near-openly generated groups are introduced. It is a topological and
multiplicative subclass of $\mathbb R$-factorizable groups. Dense
and open subgroups, quotients and Raikov completion of a near-openly
generated group are near-openly generated. Almost connected pro-Lie
groups, lindel\" off almost metrizable groups and the spaces
$C_p(X)$ of all continuous real-valued functions on a Tychonoff
space $X$ with pointwise convergence topology are near-openly
generated.

We provide characterizations of near-openly generated groups using
methods of inverse spectra and topological game theory.
\end{abstract}

\maketitle\markboth{}{Spectral representations of topological groups}

\section{Introduction}

Inverse spectra is a useful device in topology and topological
algebra. They provide a technique for approximating of complicated
spaces by simple ones. L.~Pontryagin constructed a Lie group series
for compact groups --- transfinite continuous spectra of groups
$G_\alpha$ and open homomorphisms such that the kernels of bonding
homomorphisms $p^{\alpha+1}_\alpha$ and $G_0$ are compact Lie
groups. Another examples of groups which are defined and
investigated by inverse spectra are pro-Lie groups \cite{hm} and
almost connected pro-Lie groups~\cite{hm1}. Some more applications
of inverse spectra in the category of groups can be found
in~\cite{ch}, \cite{K2013}, \cite{k2017}, \cite{TkL}, \cite{pa},
\cite{sk}, \cite{us}.

Everywhere below by a {\em spectrum} we mean an inverse system.
Spectrum $\displaystyle S=\{X_\alpha, p^{\beta}_\alpha, A\}$ is
called {\em almost continuous $\sigma$-spectrum} if it satisfies the
following conditions:
\begin{itemize}
\item[(1)] all spaces $X_\alpha$ are second-countable and the bonding maps $p^{\beta}_\alpha$ are
surjective;
\item[(2)] the directed set $A$ is $\sigma$-complete (every increasing sequence
in $A$ has a supremum in $A$);
\item[(3)] for every increasing sequence $\{\alpha_n\}\subset A$ the
space $X_\beta$ is a dense subset of
$\displaystyle\lim_\leftarrow\{X_{\alpha_n},p^{\alpha_{n+1}}_{\alpha_n},n\geq
1\}$, where $\beta=\sup\{\alpha_n\}$.
\end{itemize}
If a space $X$ is embedded in $\displaystyle\lim_\leftarrow S$ in
such a way that $p_\alpha(X)=X_\alpha$ for each $\alpha$, where
$p_\alpha\colon\displaystyle\lim_\leftarrow S\to X_\alpha$ is the
$\alpha$-th limit projection, then we say that $X$ is the {\em
almost limit} of spectrum $S$, notation
$X=\mathrm{a}-\displaystyle\lim_\leftarrow S$. In case $X$ is the
almost limit of a spectrum $S$ and for every continuous real-valued
function $f$ on $X$ there exists $\alpha\in A$ and a continuous
function $f_\alpha$ on $X_\alpha$ such that $f=f_\alpha\circ
p_\alpha$, then $S$ is said to be {\em a factorizing spectrum}.

Recall that a continuous map $f:X\to Y$ is {\em nearly open}
\cite{ArTk} (resp., {\em skeletal}) if
$f(U)\subset\mathrm{Int}\overline{f(U)}$ (resp.,
$\mathrm{Int}\overline{f(U)}\neq\varnothing$) for every open
$U\subset X$, where $\overline{f(U)}$ denotes the closure of $f(U)$
in $Y$. Nearly open maps were introduced in~\cite{pt} (see
also~\cite{tk}, where nearly open maps were called {\it $d$-open}).
In~\cite{va1} skeletal maps are called {\it $ad$-open}.

\begin{dfn}
A topological group $G$ is near-openly generated if
$G=\mathrm{a}-\displaystyle\lim_\leftarrow S_G$, where
$\displaystyle S_G=\{G_\alpha, p^{\beta}_\alpha, A\}$ is a
factorizing almost continuous $\sigma$-spectrum consisting of
second-countable topological groups $G_\alpha$ and continuous nearly
open homomorphisms $p^{\beta}_\alpha$.
\end{dfn}

The aim of the paper is to describe the class of near-openly
generated topological groups. This class is topological (i.e.
invariant under homeomorphisms) and has nice properties. It is
multiplicative. Dense and open subgroups, quotients and Raikov
completion of a near-openly generated group are near-openly
generated. Almost connected pro-Lie groups (in particular compact
groups) and lindel\" off almost metrizable groups~\cite{pa} are
near-openly generated.  Another example of such groups are the
spaces $C_p(X)$ of all continuous real-valued functions on a
Tychonoff space $X$ with pointwise convergence topology. It is worth
to note that the class of near-openly generated groups is a subclass
of $\mathbb R$-factorizable groups for which the problems whether it
is multiplicative and topological are unsolved.

Next theorem provides a topological characterization of near-openly
generated groups.

{\rm THEOREM~\ref{Main}.} The following are equivalent for a
topological group $G$:
\begin{itemize}
\item[(1)] $G$ is near-openly generated;
\item[(2)] $G$ is $\mathrm{I}$-favorable;
\item[(3)] $G$ has a $\sigma$-lattice of skeletal maps;
\item[(4)] $G$ has a $\sigma$-lattice of nearly open homomorphisms.
\end{itemize}

\medskip

$\mathrm{I}$-favorable spaces \cite{dkz} and $\sigma$-lattices of
maps on a given space are defined in \S\ 2. Note that, by
Theorem~\ref{local}, $\mathrm{I}$-favorability of an $\omega$-narrow
group is a local property.

It follows from the results of \S\ 2 that near-openly generated
groups can be described as dense subgroups of the limits of almost
continuous $\sigma$-spectra of topological groups and continuous
nearly open homomorphisms (not necessarily factorizing). At the same
time, near-openly generated groups are exactly the groups whose
underlying spaces are almost limits of almost continuous
$\sigma$-spectra with skeletal bonding maps (such spaces are called
skeletally generated spaces~\cite{v}).

We also establish characterization of near-openly generated groups
using their isomorphic embeddings in products of second countable
topological groups (see \S\ 2 for the definitions of $\pi$-regular
and regular embeddings).

{\rm THEOREM~\ref{Main1}.} The following are equivalent for a
topological group $G$:
\begin{itemize}
\item[(1)] $G$ is near-openly generated;
\item[(2)] $G$ is topologically isomorphic to a subgroup of a product of second countable topological groups and any such an embedding is
$\pi$-regular;
\item[(3)] $G$ is topologically isomorphic to a subgroup of a product of second countable topological groups and any such an embedding is
regular.
\end{itemize}

\medskip

The paper is organized as follows. \S\ 2 contains a preliminary
information about $\mathrm{I}$-favorable and skeletally generated
spaces. \S\ 3 contains the proofs of Theorem~\ref{Main},
Theorem~\ref{Main1} and Theorem~\ref{local} mentioned above. We also
provide the main properties of near-openly generated groups and
examples of such groups. The last \S\ 4 contains necessary
information about spectra and $\sigma$-lattices.
Proposition~\ref{lemmain} is a spectral theorem of the form
spectrum--lattice. The idea is similar to the classical spectral
theorem for factorizing spectra~\cite{Chi} and the theorem on
intersection of lattices~\cite{sc1}. We also provide a spectral
representation of $\mathbb R$-factorizable groups~\cite{tk91}.

All spaces are assumed to be Tychonoff and the maps are continuous.
For information about topological groups and spectra see~\cite{ArTk}
and~\cite{Chi}, respectively.

\section{$\mathrm I$-favorable and skeletally generated spaces}

$\mathrm I$-favorable spaces were introduced in~\cite{dkz}. Two
players are playing the so called \textit{open-open game} in a space
$(X,\Tee_X)$. The players take countably many turns, a round
consists of player I choosing a nonempty open set $U\subset X$ and
player II a nonempty open set $V\subset U$; player I wins if the
union of II's open sets is dense in $X$, otherwise player II wins. A
space $X$ is called {\em $\mathrm I$-favorable} if player I has a
winning strategy. This means that there exists a function
$\mu:\bigcup_{n\geq 0}\Tee_X^n\to\Tee_X$ such that for each game
$$\mu(\varnothing),B_0,\mu(B_0),B_1,\mu(B_0,B_1),B_2,\ldots, B_n,\mu(B_0,\ldots,B_n),B_{n+1},\ldots$$
the union  $\bigcup_{n\geq 0}B_n$ is dense in $X$, where
$\mu(\varnothing)\neq\varnothing$ and
$B_{k+1}\subset\mu(B_0,B_1,..,B_k)\neq\varnothing$ and
$\varnothing\neq B_k\in\Tee_X$ for all $k\geq 0$.

If there exists an almost continuous $\sigma$-spectrum $S$ with
skeletal  bonding maps such that
$X=\mathrm{a}-\displaystyle\lim_\leftarrow S$, we say that {\em $X$
is skeletally generated}, see~\cite{v}. Let us mention that
in~\cite{vv} a space $X$ is called skeletally generated if $X$ is an
almost limit of a factorizing almost continuous $\sigma$-spectrum.
It follows from Proposition~\ref{lattice} below that the two
definitions are equivalent.

Recall that a subspace $X$ of a space $Y$ is {\em $\pi$-regularly
embedded in $Y$}~\cite{va1} if there exists a function ({\it a
$\pi$-regular operator}) $\mathrm{e}\colon\Tee_X\to\Tee_Y$ between
the topologies of $X$ and $Y$ such that:
\begin{itemize}
\item[$(\mathrm{e}1)$] $\mathrm{e}(\varnothing)=\varnothing$ and
$\mathrm{e}(U)\cap X$ is a dense subset of $U$;
\item[$(\mathrm{e}2)$] $\mathrm{e}(U)\cap\mathrm{e}(V)=\varnothing$ for any $U,V\in\Tee_X$
provided $U\cap V=\varnothing$.
\end{itemize}
A similar function with the additional property
$\mathrm{e}(U)\cap\mathrm{e}(V)=\mathrm{e}(U\cap V)$ for all
$U,V\in\Tee_X$ was also called a  {\it $\pi$-regular extension
operator} in~\cite{sh}). If $\mathrm{e}$ satisfies condition
$(\mathrm{e}2)$ and $\mathrm{e}(U)\cap X=U$ for all $U\in\Tee_X$,
the operator $\mathrm{e}$ is called {\em regular}, see~\cite{shi}.

Following~\cite{sc1}, \cite[definition 8]{s} and~\cite{va1}, we say
that a family $\Psi$ of maps from a space $X$ to second-countable
spaces is a {\em $\sigma$-lattice on $X$}  if $\Psi$ satisfies the
following conditions:
\begin{itemize}
\item[(L1)] If $\{\varphi_n\}\subset\Psi$ such that $\varphi_{n+1}\prec\varphi_n$ for all $n$, then $\Psi$ contains the diagonal product
$\varphi=\triangle_{n\geq 1}\varphi_n$ (here
$\varphi_{n+1}\prec\varphi_n$ means that there is a continuous map
$\varphi^{n+1}_n:\varphi_{n+1}(X)\to\varphi_{n}(X)$ such that
$\varphi_{n}=\varphi^{n+1}_n\circ\varphi_{n+1}$).
\item[(L2)] For any map $f:X\to f(X)$ with $f(X)$ having a countable weight there is $\varphi\in\Psi$ such that $\varphi\prec f$ (i.e., there is a map
$h:\varphi(X)\to f(X)$ such that $f=h\circ\varphi$).
\end{itemize}

The following characterizations  of skeletally generated spaces
(using the present definition) were established in~\cite[theorem
1.1]{v}.

\begin{thm}\cite{v}\label{Ifav}
For a space $X$ the following are equivalent:
\begin{itemize}
\item[{\rm (1)}] $X$ is $\mathrm{I}$-favorable;
\item[{\rm (2)}] every embedding of $X$ in another space is $\pi$-regular;
\item[{\rm (3)}] $X$ is skeletally generated.
\end{itemize}
\end{thm}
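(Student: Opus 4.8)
The plan is to prove the equivalences around the cycle $(1)\Rightarrow(3)\Rightarrow(2)\Rightarrow(1)$. It is convenient to use the equivalent form of $(3)$ furnished by the spectral theorem of the form spectrum--lattice (Proposition~\ref{lemmain}) together with Proposition~\ref{lattice}: $X$ is skeletally generated if and only if $X$ carries a $\sigma$-lattice of maps onto second-countable spaces all of whose members are skeletal; the passage between such a family and a factorizing almost continuous $\sigma$-spectrum with skeletal bonding maps is exactly the content of these two propositions. I shall also freely use that every nonempty open subset of a Tychonoff space contains a nonempty cozero set, and that the preimage of a dense open set under a skeletal map is dense.

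$(1)\Rightarrow(3)$. Fix a winning strategy $\mu$ for player I, work inside the canonical embedding $X\hookrightarrow\mathbb R^{C(X)}$, and for a countable $N\subset C(X)$ put $e_N=\triangle_{f\in N}f\colon X\to X_N:=e_N(X)\subset\mathbb R^{N}$, a map onto the second-countable space $X_N$ which is also the $N$-th limit projection for the spectrum we are building. Call $N$ \emph{$\mu$-adequate} if for every finite sequence $(B_0,\dots,B_k)$ of nonempty basic open sets of $X$ of the form $e_N^{-1}(V)$, $V$ basic open in $\mathbb R^{N}$, compatible with the rules of the game there is $h\in N$ with $\varnothing\neq\operatorname{coz}h\subset\mu(B_0,\dots,B_k)$, and likewise for $\mu(\varnothing)$. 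Since for fixed countable $N$ there are only countably many such finite sequences, a standard closing-off in $\omega$ steps shows that the $\mu$-adequate countable $N$ form a $\sigma$-complete $\sigma$-directed cofinal subfamily $\mathcal N$ of $[C(X)]^{\le\omega}$. For $\mu$-adequate $N$ the map $e_N$ is skeletal: were it not, there would be a nonempty open $U\subset X$ with $W:=X_N\setminus\overline{e_N(U)}$ dense open in $X_N$; then player II, answering $\mu$, can keep all of her moves inside $e_N^{-1}(W)$, because every move of I contains $\operatorname{coz}h$ for some $h\in N$ by adequacy, $e_N(\operatorname{coz}h)$ is a nonempty open subset of $X_N$ hence meets the dense open $W$, and so $\operatorname{coz}h\cap e_N^{-1}(W)=e_N^{-1}\bigl(e_N(\operatorname{coz}h)\cap W\bigr)$ is a nonempty basic $N$-open set into which II plays; the resulting play has $\bigcup_nB_n\subset e_N^{-1}(W)$ disjoint from the nonempty open set $U$, contradicting that $\mu$ wins. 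Since $e_N=p^{N'}_N\circ e_{N'}$ for $N\subset N'$ in $\mathcal N$, skeletality of the $e_N$'s forces skeletality of all bonding maps $p^{N'}_N$. One checks that $\{X_N,p^{N'}_N,\mathcal N\}$ is a factorizing almost continuous $\sigma$-spectrum (condition (3) in the definition holds because $X_N$ and the limit $\lim_\leftarrow\{X_{N_n}\}$ for an increasing $\{N_n\}$ have the same fibre structure over $\mathbb R^{\bigcup N_n}$, and it is factorizing since each $f\in C(X)$ lies in some $N\in\mathcal N$) with $X=\mathrm a\text{-}\lim_\leftarrow$; hence $X$ is skeletally generated.

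$(3)\Rightarrow(2)$. Let $\Psi$ be a $\sigma$-lattice of skeletal maps on $X$ and $X\subset Y$ an embedding. For $\varphi\in\Psi$ the space $\varphi(X)$ is second-countable, hence metrizable, so it has a $\sigma$-discrete base; transferring such bases through the members of $\Psi$, and using skeletality (so that disjoint open sets of $X$ have disjoint $\varphi$-traces), one builds level by level a family of pairwise disjoint open sets of $Y$ which extends a $\pi$-base of $X$ and assembles into a $\pi$-regular operator $\mathrm e\colon\Tee_X\to\Tee_Y$, along the lines of the known machinery of $\pi$-regular operators for maps into metrizable spaces (cf. \cite{va1}, \cite{shi}). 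Thus every embedding of $X$ is $\pi$-regular.

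$(2)\Rightarrow(1)$. Assume every embedding of $X$ is $\pi$-regular, and fix a $\pi$-regular operator $\mathrm e$ for the canonical embedding $X\hookrightarrow\mathbb R^{C(X)}$. Player I is given a strategy by a closing-off construction dual to that of $(1)\Rightarrow(3)$: she maintains a growing countable $N\subset C(X)$, absorbing at each round the finitely many coordinates needed to write a basic open subset of player II's last move, and plays basic $N$-open sets according to a bookkeeping that runs through all basic $N$-open sets of $X$, using $\mathrm e$ to spread her moves out. The main obstacle is to verify that this strategy is winning, i.e.\ that $\bigcup_nB_n$ is always dense: one must see that a target open set of $X$, which may involve coordinates outside the current $N$ and which player II may repeatedly try to escape "upward", is nevertheless met at some round; this is precisely the place where the disjointness condition $(\mathrm e2)$ of the $\pi$-regular operator is used in an essential way (equivalently, it corresponds to the skeletality of the bonding maps and the almost continuity of the spectrum in $(3)$). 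Once this is done, the strategy witnesses that $X$ is $\mathrm I$-favorable, closing the cycle. All remaining verifications — that the various closed-off families are $\sigma$-complete and cofinal, and the routine bookkeeping in the strategies — are of a more standard sort.
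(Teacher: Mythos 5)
First, a point of comparison: the paper does not prove Theorem~\ref{Ifav} at all — it is imported from \cite{v} (Theorem 1.1 there) — so there is no internal argument to measure your proposal against; it has to stand on its own. Your implication $(1)\Rightarrow(3)$ essentially does: the closing-off to $\mu$-adequate countable $N\subset C(X)$, the verification that $e_N$ is skeletal by letting player II answer $\mu$ inside $e_N^{-1}(W)$ for $W=X_N\setminus\overline{e_N(U)}$, and the assembly into a factorizing almost continuous $\sigma$-spectrum are all sound (modulo the harmless fix that II should shrink $\operatorname{coz}h\cap e_N^{-1}(W)$ to a \emph{basic} $N$-open set so that adequacy applies to the recorded history). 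This is the same closing-off technique the present paper itself uses, for a different purpose, in Proposition~\ref{lattice}.

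The other two implications, however, are announcements rather than proofs. In $(3)\Rightarrow(2)$ the only substantive justification offered is the parenthetical that skeletality of $\varphi$ makes disjoint open subsets of $X$ have disjoint $\varphi$-traces; that is false — an open (hence skeletal) projection of $\mathbb R^2$ onto $\mathbb R$ sends the disjoint strips $\mathbb R\times(0,1)$ and $\mathbb R\times(2,3)$ onto the same set. Securing the disjointness condition $(\mathrm e2)$ is precisely the hard part of building a $\pi$-regular operator from a skeletal spectrum, and deferring to ``known machinery'' does not discharge it. Similarly, in $(2)\Rightarrow(1)$ you correctly isolate the only nontrivial step — proving that the bookkeeping strategy is winning, i.e.\ that player II cannot forever evade a fixed target open set by retreating to new coordinates — and then state that this is where $(\mathrm e2)$ is used, without using it. Since that verification is the entire content of the implication, the cycle is not closed. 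A workable repair is to replace your $(2)\Rightarrow(1)$ by the $\mathrm e$-admissible-set construction that the paper does spell out in the proof of Proposition~\ref{lattice}, which turns a $\pi$-regular embedding into a product of second-countable spaces into a $\sigma$-lattice of skeletal maps (so $(2)\Rightarrow(3)$); but that still leaves ``skeletally generated $\Rightarrow$ $\mathrm I$-favorable'' to be proved by an explicit strategy, and $(3)\Rightarrow(2)$ to be carried out in detail as in \cite{v} and \cite{vv}.
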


We can extend the list of conditions characterizing skeletally
generated spaces. A family of sets $\{B_t\}_{t<\lambda}$ indexed by
ordinals such that $B_{t}\subset B_{t+1}$, $t+1<\lambda$, and
$B_\alpha=\bigcup\{B_t: t<\alpha\}$ for limit ordinal
$\alpha<\lambda$ is called an {\it increasing transfinite family of
sets}.

\begin{pro}\label{lattice}
For a space $X$ the following are equivalent:
\begin{itemize}
\item[{\rm (1)}] $X$ is skeletally generated;
\item[{\rm (2)}] $X$ has a $\sigma$-lattice of skeletal maps;
\item[{\rm (3)}] there exists a factorizing almost continuous $\sigma$-spectrum $S$ with
skeletal  bonding maps such that
$X=\mathrm{a}-\displaystyle\lim_\leftarrow S$.
\end{itemize}
\end{pro}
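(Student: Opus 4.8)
The plan is to prove the cycle of implications $(1)\Rightarrow(2)\Rightarrow(3)\Rightarrow(1)$, leaning heavily on Theorem~\ref{Ifav} to bring $\mathrm{I}$-favorability into play. Since Theorem~\ref{Ifav} already tells us that (1) is equivalent to $X$ being $\mathrm{I}$-favorable, the content of Proposition~\ref{lattice} is really to interpolate the $\sigma$-lattice condition (2) and the explicit factorizing-spectrum condition (3) into that chain.

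\smallskip

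\noindent\emph{$(1)\Rightarrow(2)$.} Assume $X$ is skeletally generated, so by Theorem~\ref{Ifav} every embedding of $X$ is $\pi$-regular; equivalently, fix an embedding of $X$ into a product $\prod_{i\in I}M_i$ of second-countable spaces together with a $\pi$-regular operator $\mathrm{e}\colon\Tee_X\to\Tee_Y$, where $Y=\overline{X}$ in the product. I would let $\Psi$ be the family of all maps $\pi_J\!\restriction\! X\colon X\to \pi_J(X)$ for countable $J\subset I$, where $\pi_J$ is the projection onto $\prod_{i\in J}M_i$ — but these are only skeletal after one checks that $\pi$-regularity of the embedding forces each such restriction to be skeletal. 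This is the standard fact (from \cite{va1}) that a projection from a $\pi$-regularly embedded subspace onto a coordinate subproduct is skeletal; I would cite it or reprove it in one line using the $\pi$-regular operator to push open sets forward. Condition (L1) (closure under countable diagonal products) and (L2) (cofinality below any map to a second-countable space) are then immediate from the structure of subproducts of $\prod M_i$ with countable index sets: any map of $X$ to a second-countable space factors, up to the standard approximation, through some countable subproduct, and countably many countable index sets have a countable union.

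\smallskip

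\noindent\emph{$(2)\Rightarrow(3)$.} Given a $\sigma$-lattice $\Psi$ of skeletal maps on $X$, I would build the spectrum in the usual way: take as index set $A$ the collection of elements of $\Psi$ (or rather equivalence classes under mutual domination), ordered by $\prec$; for $\varphi\in A$ put $X_\varphi=\varphi(X)$ and let $p^\psi_\varphi$ be the bonding map witnessing $\psi\prec\varphi$. Property (L1) gives exactly $\sigma$-completeness of $A$ (sups of increasing sequences exist, realized by diagonal products) and condition (3) of an almost continuous $\sigma$-spectrum (the image at the sup sits densely in the limit of the countable subspectrum). Property (L2) gives that the natural map $X\to\lim_\leftarrow S$ has each limit projection onto $X_\varphi$ equal to $\varphi$, so $X=\mathrm{a}\text{-}\!\lim_\leftarrow S$; it also yields the factorizing property, since every real-valued continuous function on $X$ maps $X$ into a second-countable (indeed separable metrizable) space and hence factors through some $\varphi\in\Psi$. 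All bonding maps are skeletal because each is of the form $\varphi^\psi_\varphi$ arising from $\psi=\psi\triangle\varphi\prec\varphi$ with both $\psi,\varphi$ skeletal — here I would use that a composition $\varphi=\varphi^\psi_\varphi\circ\psi$ with $\varphi$ and $\psi$ both skeletal and $\psi$ surjective forces $\varphi^\psi_\varphi$ skeletal. This is the one genuinely technical point and the main obstacle: verifying that ``skeletal'' is inherited by the bonding maps of the lattice-spectrum. It is the step where the hypothesis that the members of $\Psi$ are skeletal (not merely arbitrary) is used, and it mirrors the corresponding verification for factorizing spectra in \cite{Chi}; I expect to isolate it as a small lemma about factorizations of skeletal maps.

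\smallskip

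\noindent\emph{$(3)\Rightarrow(1)$.} This is immediate: a factorizing almost continuous $\sigma$-spectrum with skeletal bonding maps is in particular an almost continuous $\sigma$-spectrum with skeletal bonding maps whose almost limit is $X$, which is precisely the definition of skeletally generated. This implication also makes rigorous the remark in the text that the two definitions of ``skeletally generated'' (the one via arbitrary skeletal $\sigma$-spectra and the one via \emph{factorizing} skeletal $\sigma$-spectra used in \cite{vv}) coincide, since the chain shows any skeletally generated space admits a factorizing such representation.
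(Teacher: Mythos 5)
Your implications $(2)\Rightarrow(3)$ and $(3)\Rightarrow(1)$ are fine and essentially coincide with the paper's route: the paper delegates $(2)\Rightarrow(3)$ to its Proposition~\ref{sp-lat}, and your observation that a factorization $\varphi=h\circ\psi$ with $\varphi$ skeletal and $\psi$ surjective forces $h$ to be skeletal is correct (and far easier than you suggest). The genuine problem is in $(1)\Rightarrow(2)$. The ``standard fact'' you invoke --- that the restriction to $X$ of the projection onto \emph{every} countable subproduct is skeletal whenever $X$ is $\pi$-regularly embedded in $\prod_{i\in I}M_i$ --- is false. Take $X=(C\times\{0\})\cup([0,1]\times\{1\})\subset[0,1]^2$, where $C$ is the Cantor set. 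Being second countable, $X$ has countable $\pi$-weight, hence is $\mathrm I$-favorable, so by Theorem~\ref{Ifav} this embedding is $\pi$-regular (and $X$, being compact, is even $C$-embedded). Yet the first-coordinate projection restricted to $X$ sends the nonempty open set $(V\cap C)\times\{0\}$, for any open $V$ meeting $C$, onto a nowhere dense subset of $[0,1]$, so it is not skeletal. Consequently your family $\Psi$ of \emph{all} countable coordinate restrictions is not a family of skeletal maps, and the implication collapses.

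What is actually needed --- and what the paper spends the bulk of its proof on --- is to cut down to the subfamily of \emph{$\mathrm e$-admissible} countable index sets $B$, i.e.\ those satisfying $\pi_B^{-1}\bigl(\pi_B\bigl(\overline{\mathrm e(\pi_B^{-1}(U)\cap X)}\bigr)\bigr)=\overline{\mathrm e(\pi_B^{-1}(U)\cap X)}$ for all $U$ in a standard base of the subproduct. For such $B$ one gets $\pi_B(\mathrm e(V))\subset\overline{\pi_B(V)}$ for every open $V\subset X$, and this is what yields skeletality of $p_B=\pi_B|_X$. One must then prove that the admissible countable sets are cofinal among countable subsets of the index set (which repairs your (L2) argument) and that the relevant countable unions are again admissible (which repairs (L1)); the latter is not automatic from ``countably many countable sets have a countable union,'' since $p_{B_{n+1}}\prec p_{B_n}$ does not give $B_n\subset B_{n+1}$, and admissibility of the union must be verified directly. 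This admissibility machinery is the real content of $(1)\Rightarrow(2)$ and is absent from your argument.
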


\begin{proof}
$(1)\Rightarrow (2)$. Suppose that $X$ is skeletally generated.
Then, since $X$ has countable cellularity, since it is
$\mathrm{I}$-favorable~\cite[theoerem 1.1]{dkz}. If $X$ is
metrizable, then $X$ is second-countable and the identity map forms
a $\sigma$-lattice with the required properties. So, let $X$ be of
uncountable weight. We consider $X$ as a $C$-embedded subset of a
product $\Pi=\prod_{\alpha\in A}X_\alpha$ of second-countable
spaces. Then there exists a $\pi$-regular operator $\mathrm
e:\mathcal T_X\to\mathcal T_\Pi$.

For any set $B\subset A$ fix a standard open base $\mathcal B_B$ for
$\prod_{\alpha\in B}X_\alpha$ of cardinality $\max\{|B|,
\aleph_0\}$, and consider the projection
$\pi_B:\Pi\to\prod_{\alpha\in B}X_\alpha$. We say that a set
$B\subset A$ is {\em $\mathrm{e}$-admissible} if
$$\pi_B^{-1}(\pi_B(\overline{\mathrm{e}(\pi_B^{-1}(U)\cap
X)}))=\overline{\mathrm{e}(\pi_B^{-1}(U)\cap X)}$$ for all
$U\in\mathcal B_B.$ Because for every open set $W\subset\Pi$ there
is a countable set $B_W\subset A$ with
$\pi_{B_W}^{-1}(\pi_{B_W}(\overline{W}))=\overline{W}$, one can show
that the family $\mathcal A$ of all $\mathrm{e}$-admissible subsets
of $A$ has the following properties, where $X_B=\pi_B(X)$ (see the
proof of~\cite[proposition 3.1(ii)]{kpv1} or~\cite[proposition
3.7]{vv}:
\begin{itemize}
\item for every open $V\subset X$ and every $B\in\mathcal A$ we have $\pi_B(\mathrm{e}(V))\subset\overline{\pi_B(V)}$
and the restriction map $p_B=\pi_B|_X:X\to X_B$ is skeletal;
\item the union of every increasing transfinite family $\{B_t\}\subset\mathcal A$ belongs to $\mathcal A$;
\item for any set $C\subset A$ there is $B_C\in\mathcal A$ of cardinality $|B_C|=\max\{|C|, \aleph_0\}$ with $C\subset B_C$.
\end{itemize}

\smallskip

\noindent{\it  Claim 1.} For every surjective map $f:X\to M$, where
$M$ is a second-countable space, there is a countable
$\rm{e}$-admissible set $B\subset A$ such that $p_B\prec f$.

\smallskip

Indeed, let $\{h_k\}$ be a sequence of continuous functions on $M$
determining the topology of $M$. Since $X$ is $C$-embedded in $\Pi$,
each $h_k\circ f$ can be continuously extended to a function $f_k$
on $\Pi$. So, there is a countable set $B\subset A$ such that all
$f_k$ can be factored through $\Pi_B=\prod_{\alpha\in B}X_\alpha$.
Consequently, for any $k$ there is a function $g_k$ on $\Pi_B$ with
$g_k\circ\pi_B=f_k$. The maps $g_k$ determine a map
$g:X_f=\pi_B(X)\to M$ such that $f=g\circ\pi_B|_X$. Because every
countable subset of $B$ is contained in a countable $\rm
e$-admissible set, we may assume that $B$ is also $\rm
e$-admissible.

\smallskip

Let us show that the family $\Psi$ of all maps $p_B$, $B\subset A$
is countable and $\rm e$-admissible, form a $\sigma$-lattice of
skeletal maps for $X$. Note that every $p_B\in\Psi$ is skeletal. By
claim 1, $\Psi$ satisfies condition $(L2)$. To show that $\Psi$
satisfies also $(L1)$, suppose $\{B_n\}$ is a sequence of countable
$\rm e$-admissible subsets of $A$ such that $p_{B_{n+1}}\prec
p_{B_n}$ for all $n$. Note that the last relation does not imply
that each $B_n$ is a subset of $B_{n+1}$, but we have surjective
maps $q^{n+k}_n: X_{{B_{n+k}}}\to X_{B_n}$, $n,k\geq 1$, such that
$q^n_{n-1}\circ q^{n+1}_n=q^{n+1}_{n-1}$ for each $n$. So, there is
an inverse sequence $S=\{X_{{B_{n}}}, q^{n+1}_n, n\geq 1\}$. Because
$p_B\prec\triangle_np_{B_n}\prec p_B$, where $B=\bigcup_{n=1}^\infty
B_n$, $X_B$ is a dense set in $\displaystyle\lim_\leftarrow S$ and
there are maps $q_n: X_B\to X_{B_n}$ with $q_n\circ p_B=p_{B_n}$,
$n\geq 1$. This means that if $\mathcal B_n$ is a standard open base
for $\Pi_{B_n}=\prod_{\alpha\in B_n}X_\alpha$, then the open family
$\mathcal B=\{q_n^{-1}(V\cap X_{B_n}): V\in\mathcal B_n, n\geq 1\}$
is a base for $X_B$. It is clear that each $W\in\mathcal B$ is the
intersection of $X_B$ with an element of the standard open base for
$\Pi_B$. Because each $B_n$ is $\rm e$-admissible, we have
$$\pi_{B_n}^{-1}(\pi_{B_n}(\overline{\rm e(p_{B_n}^{-1}(V\cap
X_{B_n}))}))=\overline{\rm e(p_{B_n}^{-1}(V\cap X_{B_n}))}.$$ Since
$\pi_B\prec\pi_{B_n}$ for all $n$, we obtain that if
$W=q_n^{-1}(V\cap X_{B_n})$, then $$\rm e(p_B^{-1}(W))=\rm
e(p_{B_n}^{-1}(V\cap X_{B_n}))\ \mbox{and}\
\pi_B^{-1}(\pi_{B}(\overline{\rm e(p_{B}^{-1}(W))}))=\overline{\rm
e(p_{B}^{-1}(W))}.$$ The last equalities imply that $p_B$ is $\rm
e$-admissible.

$(2)\Rightarrow (3)$ follows from proposition~\ref{sp-lat}.

$(3)\Rightarrow (1)$. This implication is trivial.
\end{proof}

\begin{cor}\cite{v}\label{Icor}
A space $X$ is $\mathrm{I}$-favorable if and only if $X$ is
$\pi$-regularly embedded in a product of second-countable spaces.
\end{cor}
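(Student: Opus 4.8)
The plan is to obtain the statement from Theorem~\ref{Ifav} together with the construction carried out inside the proof of Proposition~\ref{lattice}. The implication from left to right is immediate: a Tychonoff space $X$ embeds into some power $\mathbb R^{\kappa}$ of the real line, which is a product of second-countable spaces, and if $X$ is $\mathrm I$-favorable then by Theorem~\ref{Ifav}, $(1)\Rightarrow(2)$, \emph{every} embedding of $X$ in another space is $\pi$-regular, in particular this one. Hence $X$ is $\pi$-regularly embedded in a product of second-countable spaces.

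For the converse, let $\iota\colon X\to\Pi=\prod_{\alpha\in A}X_\alpha$ be an embedding into a product of second-countable spaces and $\mathrm e\colon\mathcal T_X\to\mathcal T_\Pi$ a $\pi$-regular operator. If $X$ has countable weight it is metrizable and the identity map forms a $\sigma$-lattice of skeletal maps (cf.\ the proof of Proposition~\ref{lattice}), so assume $X$ has uncountable weight. The goal is to produce a $\sigma$-lattice of skeletal maps on $X$: then Proposition~\ref{lattice}, $(2)\Rightarrow(1)$, gives that $X$ is skeletally generated, and Theorem~\ref{Ifav}, $(3)\Rightarrow(1)$, that $X$ is $\mathrm I$-favorable. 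Now, the construction of such a lattice in the proof of Proposition~\ref{lattice}, $(1)\Rightarrow(2)$, uses only that $X$ is a $C$-embedded subspace of a product of second-countable spaces together with a $\pi$-regular operator into that product; since our embedding $\iota$ need not be $C$-embedded, the one technical point is to repair this. I would pass to $Y=\Pi\times\mathbb R^{C(X)}$, still a product of second-countable spaces, and embed $X$ diagonally by $j(x)=(\iota(x),(g(x))_{g\in C(X)})$. Then $j$ is an embedding; $j(X)$ is $C$-embedded in $Y$, because every $g\in C(X)$ is the restriction to $j(X)$ of the projection of $Y$ onto its $g$-th real coordinate; and $\mathrm e'(U):=\mathrm e(U)\times\mathbb R^{C(X)}$ defines a $\pi$-regular operator $\mathrm e'\colon\mathcal T_X\to\mathcal T_Y$: indeed $\mathrm e'(U)\cap j(X)$ corresponds under $j$ to $\mathrm e(U)\cap X$, so $(\mathrm{e}1)$ for $\mathrm e'$ follows from $(\mathrm{e}1)$ for $\mathrm e$, while $\mathrm e'(U)\cap\mathrm e'(V)=(\mathrm e(U)\cap\mathrm e(V))\times\mathbb R^{C(X)}=\varnothing$ whenever $U\cap V=\varnothing$ by $(\mathrm{e}2)$ for $\mathrm e$. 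Identifying $X$ with $j(X)\subseteq Y$, we are now exactly in the setting of the proof of Proposition~\ref{lattice}, $(1)\Rightarrow(2)$, with $(Y,\mathrm e')$ in place of $(\Pi,\mathrm e)$, and that argument produces the desired $\sigma$-lattice of skeletal maps on $X$.

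The only real obstacle is this $C$-embedding issue: without it, condition (L2) may fail for the lattice $\Psi=\{p_B\}$ built from the $\mathrm e$-admissible countable subsets of the index set, because the coordinates of $\Pi$ need not suffice to factor an arbitrary continuous map of $X$ onto a second-countable space; adjoining the factors $\mathbb R^{C(X)}$ remedies this while preserving both the product-of-second-countable structure and the existence of a $\pi$-regular operator. Everything else is a direct appeal to Theorem~\ref{Ifav} and Proposition~\ref{lattice}.
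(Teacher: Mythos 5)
Your proposal is correct and follows essentially the route the paper intends: the forward direction is immediate from Theorem~\ref{Ifav}, $(1)\Rightarrow(2)$, and the converse reuses the construction from the proof of Proposition~\ref{lattice}, $(1)\Rightarrow(2)$, to obtain a $\sigma$-lattice of skeletal maps and then conclude via Theorem~\ref{Ifav}. Your repair of the $C$-embedding hypothesis --- passing to $\Pi\times\mathbb R^{C(X)}$ and extending the operator by $\mathrm e'(U)=\mathrm e(U)\times\mathbb R^{C(X)}$ --- is a legitimate and worthwhile explicit treatment of a point the paper leaves implicit, since condition $(\mathrm{L}2)$ in that construction does rely on $C$-embeddedness.
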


The first item of next proposition is from~\cite{v}, while other
items were established in~\cite[Corollary 4]{dkz}.

\begin{pro}\cite{dkz},\cite{v}\label{proIfav}
Let $X$ be an $\mathrm{I}$-favorable space. Then:
\begin{itemize}
\item every open subset of $X$ is $\mathrm{I}$-favorable;
\item every dense subspace of $X$ is $\mathrm{I}$-favorable;
\item every space containing $X$ as a dense subspace is $\mathrm{I}$-favorable;
\item every image of $X$ under a skeletal map is
$\mathrm{I}$-favorable;
\item a product of $\mathrm{I}$-favorable spaces is $\mathrm{I}$-favorable.
\end{itemize}
\end{pro}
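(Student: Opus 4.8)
The plan is to deduce all five items from the characterizations already established, namely Theorem~\ref{Ifav} and Corollary~\ref{Icor}, supplemented by a single direct use of the open-open game for the statement about skeletal images; this avoids any delicate surgery on winning strategies. So fix an $\mathrm I$-favorable space $X$ and, by Corollary~\ref{Icor}, an embedding $X\subset\Pi=\prod_{\alpha\in A}X_\alpha$ of $X$ into a product of second countable spaces, together with a $\pi$-regular operator $\mathrm e\colon\Tee_X\to\Tee_\Pi$ with $\mathrm e(V)\cap X\subset V$ for all $V$. For the first two items the idea is to exhibit the subspace in question as itself $\pi$-regularly embedded in $\Pi$, and then invoke Corollary~\ref{Icor}. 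If $U\subset X$ is open then $U\subset X\subset\Pi$ and every open subset of $U$ is open in $X$; here $V\mapsto\mathrm e(V)$ works, since $(\mathrm e2)$ is inherited verbatim and, because $U\subset X$, we get $\mathrm e(V)\cap U=(\mathrm e(V)\cap X)\cap U\subset V$, still dense in the open set $V\subset U$. If $D\subset X$ is dense, pick for each open $W\subset D$ an open $\widehat W\subset X$ with $\widehat W\cap D=W$ (with $\widehat\varnothing=\varnothing$) and set $\mathrm e_D(W)=\mathrm e(\widehat W)$: density of $D$ forces $\widehat W\cap\widehat{W'}=\varnothing$ whenever $W\cap W'=\varnothing$, giving $(\mathrm e2)$, and $(\mathrm e1)$ holds because the open-in-$X$ set $\mathrm e(\widehat W)\cap X$, being dense in $\widehat W$, meets the dense set $D$ in a set dense in $W=\widehat W\cap D$.

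For the third item I would use Theorem~\ref{Ifav}$(1)\Leftrightarrow(2)$. Let $X$ be dense in $Y$ and $Y\subset Z$ an arbitrary embedding; then $X\subset Z$ is an embedding, hence $\pi$-regular with some operator $\mathrm e'\colon\Tee_X\to\Tee_Z$. For open $W\subset Y$ choose open $\widetilde W\subset Z$ with $\widetilde W\cap Y=W$ and put $\mathrm e''(W)=\mathrm e'(W\cap X)\cap\widetilde W$; since $W\cap X$ is dense in $W$, the set $\mathrm e''(W)\cap Y=\mathrm e'(W\cap X)\cap W$ contains the dense-in-$W$ set $\mathrm e'(W\cap X)\cap X$, so $(\mathrm e1)$ holds, while $(\mathrm e2)$ is immediate from $W\cap W'=\varnothing\Rightarrow(W\cap X)\cap(W'\cap X)=\varnothing$; thus every embedding of $Y$ is $\pi$-regular and $Y$ is $\mathrm I$-favorable. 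The fourth item is cleanest via the open-open game. Let $f\colon X\to Z$ be a skeletal surjection and $\mu$ a winning strategy for player I on $X$. Define player I's strategy $\nu$ on $Z$ by $\nu(\varnothing)=\mathrm{Int}\,\overline{f(\mu(\varnothing))}$ (nonempty since $f$ is skeletal), and, given player II's legal responses $O_0,\dots,O_n$, by $\nu(O_0,\dots,O_n)=\mathrm{Int}\,\overline{f(\mu(B_0,\dots,B_n))}$ where $B_0=f^{-1}(O_0)\cap\mu(\varnothing)$ and $B_{k+1}=f^{-1}(O_{k+1})\cap\mu(B_0,\dots,B_k)$. An easy induction shows each $B_k$ is a legal move for II in the $X$-game (nonemptiness of $B_k$ uses $O_k\subset\overline{f(\mu(\dots))}$), so $\bigcup_kB_k$ is dense in $X$, hence $\bigcup_kf(B_k)$ is dense in $Z$ as $f$ is a continuous surjection, and $f(B_k)\subset O_k$ gives that $\bigcup_kO_k$ is dense in $Z$.

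Finally, the product. If $X=\prod_{t\in T}X_t$ with each $X_t$ $\mathrm I$-favorable, apply Corollary~\ref{Icor} to each factor, getting embeddings $X_t\subset\Pi_t$ into products of second countable spaces with $\pi$-regular operators $\mathrm e_t$ such that $\mathrm e_t(V)\cap X_t\subset V$; then $X\subset\prod_{t}\Pi_t$, again a product of second countable spaces, and it suffices to produce a $\pi$-regular operator $\mathrm E$ for this embedding. Fix a base $\mathcal B$ of $X$ consisting of nonempty boxes $V=\prod_{t\in F}V_t\times\prod_{t\notin F}X_t$ with $F\subset T$ finite, put $\mathrm E(V)=\prod_{t\in F}\mathrm e_t(V_t)\times\prod_{t\notin F}\Pi_t$, and for arbitrary open $W\subset X$ set $\mathrm E(W)=\bigcup\{\mathrm E(V):V\in\mathcal B,\ V\subset W\}$. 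Condition $(\mathrm e1)$ for a box amounts to the fact that a finite product of dense subsets is dense, and passes to general open $W$ through the union (using $\bigcup\{V\in\mathcal B:V\subset W\}=W$); condition $(\mathrm e2)$ reduces to the observation that two disjoint boxes are already disjoint in some common coordinate $t\in F\cap F'$, where $\mathrm e_t(V_t)\cap\mathrm e_t(V'_t)=\varnothing$, and again passes to unions. Corollary~\ref{Icor} then yields that $X$ is $\mathrm I$-favorable. The product is the only item requiring a genuinely new construction, and I expect the main obstacle to sit here: either in the bookkeeping just sketched for extending $\mathrm E$ from boxes to all open sets while keeping $(\mathrm e2)$, or, in a purely strategy-theoretic approach, in the fusion argument needed to run the coordinate games governed by the $\mu_t$ along a single $\omega$-length play while guaranteeing density in $\prod_t X_t$.
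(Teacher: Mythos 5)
Your argument is correct in all five items, but note that the paper itself gives no proof of this proposition: it is quoted from \cite{dkz} (items 2--5, Corollary 4 there) and \cite{v} (item 1), where the arguments are direct manipulations of winning strategies in the open-open game. Your route is genuinely different: you reduce items 1, 2, 3 and 5 to the $\pi$-regular-embedding characterizations (Theorem~\ref{Ifav} and Corollary~\ref{Icor}) and keep a game argument only for skeletal images. The individual constructions all check out --- $\mathrm e(V)\cap U=\mathrm e(V)\cap X\subset V$ for open $U$; disjointness of $\widehat W,\widehat{W'}$ via density of $D$; the operator $\mathrm e'(W\cap X)\cap\widetilde W$ for a superspace; the reconstruction $B_{k+1}=f^{-1}(O_{k+1})\cap\mu(B_0,\dots,B_k)$, whose nonemptiness follows exactly as you say from $O_{k+1}\subset\overline{f(\mu(B_0,\dots,B_k))}$; and for products the observation that two disjoint nonempty boxes must have disjoint factors in some common coordinate of their finite supports, which is precisely what makes $(\mathrm e2)$ survive the passage from boxes to unions (the worry you flag at the end is not a real obstacle, since $\mathrm E(W)\cap\mathrm E(W')$ is a union of pairwise intersections of box images, each already empty). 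What your approach buys is uniformity and freedom from strategy surgery; what it costs is logical dependence on Theorem~\ref{Ifav}, which is itself imported from \cite{v} --- within this paper that is unobjectionable (the characterization theorems precede the proposition and do not use it), but as a replacement for the original citations one should check that the proof of Theorem 1.1 in \cite{v} does not already invoke the closure properties of \cite{dkz} being proved here. A small bookkeeping point in item 5: the representation $V=\prod_{t\in F}V_t\times\prod_{t\notin F}X_t$ is not unique when some $V_t=X_t$, so you should fix one representation (say with $F$ minimal) for each member of $\mathcal B$ before defining $\mathrm E(V)$.
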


A subspace $Y$ of $X$ is said to be {\it $z$-embedded} in $X$ if for
every zero-set $F$ in $Y$ there exists a zero-set $\Phi$ in $X$ such
that $F=Y\cap\Phi$.

\begin{pro}\label{pi-z}
If $Y$ is $\pi$-regularly embedded in the product $\Pi$ of
second-countable spaces, then $Y$ is $z$-embedded in $\Pi$.
\end{pro}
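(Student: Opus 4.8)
The plan is to reduce to a countable subproduct of $\Pi$, where the ambient space is second countable — hence metrizable and perfectly normal — so that every subspace is automatically $z$-embedded.

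Write $\Pi=\prod_{\alpha\in A}X_\alpha$ with all $X_\alpha$ second countable and let $\mathrm e\colon\Tee_Y\to\Tee_\Pi$ be a $\pi$-regular operator. It suffices to show that every zero-set $F=g^{-1}(0)$ of $Y$, $g\in C(Y)$, has the form $Y\cap\Phi$ for a zero-set $\Phi$ of $\Pi$. The key step is to find a countable $B\subseteq A$ such that $g=\tilde g\circ p_B$ for some $\tilde g\in C(X_B)$, where $p_B=\pi_B|Y\colon Y\to X_B:=\pi_B(Y)$. Granting this, $\tilde g^{-1}(0)$ is a zero-set of $X_B$; since $X_B\subseteq\Pi_B:=\prod_{\alpha\in B}X_\alpha$ and $\Pi_B$ is metrizable, hence perfectly normal, $\Psi:=\overline{\tilde g^{-1}(0)}^{\,\Pi_B}$ is a zero-set of $\Pi_B$ with $\Psi\cap X_B=\tilde g^{-1}(0)$, and therefore
$$F=p_B^{-1}\big(\tilde g^{-1}(0)\big)=(\pi_B|Y)^{-1}(\Psi)=Y\cap\pi_B^{-1}(\Psi),$$
with $\pi_B^{-1}(\Psi)$ a zero-set of $\Pi$ as the preimage of a zero-set under $\pi_B\colon\Pi\to\Pi_B$. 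So $\Phi:=\pi_B^{-1}(\Psi)$ works.

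To produce such a $B$ I would use the $\mathrm e$-admissible subsets of $A$ associated with $\mathrm e$ (as in the proof of Proposition~\ref{lattice}, following \cite{kpv1},\cite{vv}): for $\mathrm e$-admissible $B$ the map $p_B$ is skeletal and satisfies $\pi_B(\mathrm e(V))\subseteq\overline{\pi_B(V)}$ for every open $V\subseteq Y$; every countable subset of $A$ is contained in a countable $\mathrm e$-admissible set; and for every open $W\subseteq\Pi$ the closure $\overline W$ depends on only countably many coordinates. Fix a countable base $\{O_i\}$ of $\mathbb R$ and put $V_i=g^{-1}(O_i)$; choose a countable $\mathrm e$-admissible $B$ carrying all the coordinates on which the sets $\overline{\mathrm e(V_i)}$ depend. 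One then checks that $g$ is constant on the fibres of $p_B$, which gives the factorization.

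The heart of the proof — and the main obstacle — is this last check. The delicacy, which is exactly the gap between $\pi$-regular and regular embeddings, is that $\mathrm e(V)\cap Y$ is only guaranteed to be \emph{dense} in $V$, not equal to $V$; so one cannot naively transport an open set of $Y$ to an open set of $\Pi$ with the same trace. The discrepancy is nevertheless nowhere dense, and this is precisely where the skeletality of $p_B$ (equivalently, that $p_B^{-1}$ carries nowhere dense sets to nowhere dense sets) together with $\pi_B(\mathrm e(V))\subseteq\overline{\pi_B(V)}$ enters. In outline: if $\pi_B(y)=\pi_B(z)$ with $g(y)\in O_i$ and $g(z)\notin\overline{O_i}$, then $y\in V_i$, hence $y\in\overline{\mathrm e(V_i)}$ by density, and since $\overline{\mathrm e(V_i)}$ depends only on coordinates in $B$ also $z\in\overline{\mathrm e(V_i)}$; applying $(\mathrm e2)$ to $V_i$ and to a cozero-set of $Y$ that contains $g^{-1}(\mathbb R\setminus\overline{O_i})$ and has closure disjoint from $\overline{V_i}$, and using the nowhere-dense control from skeletality to handle the density gap, one reaches a contradiction. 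This is the $\pi$-regular counterpart of the factorization carried out under a $C$-embedding in the proof of Proposition~\ref{lattice}.
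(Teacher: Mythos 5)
Your reduction step is fine as far as it goes: once $g=\tilde g\circ p_B$ with $\tilde g$ continuous on $X_B$, the metrizable (hence perfectly normal) $\Pi_B$ lets you extend $\tilde g^{-1}(0)$ and pull back. The problem is that the factorization claim this rests on is not a reduction: it is essentially equivalent to the proposition itself. If $Y$ is $z$-embedded in $\Pi$, then every cozero set of $Y$ is the trace of a cozero set of $\Pi$, which depends on countably many coordinates, and applying this to the sets $g^{-1}(O_i)$ yields exactly your factorization; conversely, your argument derives the $z$-embedding from the factorization. So the step you label ``the heart of the proof --- and the main obstacle'' is the entire content, and the outline you give for it does not close. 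Concretely, from $p_B(y)=p_B(z)$ you obtain $z\in\overline{\mathrm{e}(V_i)}$ and, by the same density argument, $z\in\overline{\mathrm{e}(V')}$ for a cozero set $V'$ disjoint from $V_i$; but two disjoint open subsets of $\Pi$ can share a closure point, so no contradiction follows, and the appeal to ``nowhere-dense control from skeletality'' is named rather than used --- skeletality of $p_B$ constrains images of open sets, not the behaviour of the closures of $\mathrm{e}(V_i)$ and $\mathrm{e}(V')$ at the single point $z$. A second unaddressed point: even granting that $g$ is constant on the fibres of $p_B$, you still need $\tilde g$ to be continuous (equivalently, $p_B(F)$ closed in $X_B$) in order to have $\Psi\cap X_B=\tilde g^{-1}(0)$; constancy on fibres does not give this, since $p_B$ need not be a quotient map.

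The paper's proof avoids factorization altogether and is much shorter. It writes $F=\bigcap_i U_i$ with the $U_i$ open in $Y$ and $\overline{U_{i+1}}\subset U_i$, observes that each $\overline{\mathrm{e}(U_i)}$ is a zero-set of $\Pi$ (the closure of an open set in a product of second-countable spaces depends on countably many coordinates, hence is the preimage of a closed set in a metrizable countable subproduct), and sets $\Phi=\bigcap_i\overline{\mathrm{e}(U_i)}$. The inclusion $F\subset\Phi$ follows from $(\mathrm{e}1)$, since $U_i\subset\overline{\mathrm{e}(U_i)\cap Y}$, and $\Phi\cap Y\subset F$ is checked pointwise by applying $(\mathrm{e}2)$ to $U_k$ and a neighborhood $V$ of a putative point of $(\Phi\cap Y)\setminus F$ missing $\overline{U_k}$. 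Thus the delicate interaction between $\mathrm{e}(V)\cap Y$ and $V$ --- which you correctly identified as the crux --- is confronted exactly once, for a single pair of disjoint open sets, instead of being buried inside a factorization argument; your write-up, by contrast, routes everything through a stronger intermediate claim and then leaves its proof open.
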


\begin{proof}  Let $\rm e:\Tee_{Y}\to\Tee_{\Pi}$ be a $\pi$-regular
operator, and $F\in Y$ be a zero-set. We represent $F$ as the
intersection of a decreasing sequence $\{U_i\}$ of open sets
$U_i\subset Y$ with $\overline U_{i+1}\subset U_i$ for each $i\geq
1$. Since $\Pi$ is a product of second-countable spaces, all
$\overline{\rm e(U_i)}$ are zero-sets in $\Pi$. Obviously,
$F\subset\Phi=\bigcap\overline{\rm e(U_i)}$. If there is a point
$x\in (\Phi\cap Y)\setminus F$, then $x\not\in\overline U_k$ for
some $k$. So, we can find a neighborhood $V\subset Y$ of $x$ with
$V\cap\overline U_k=\varnothing$. Consequently, $x\in\overline{\rm
e(V)}$ and $\rm e(V)\cap\rm e(U_k)=\varnothing$, which contradicts
$x\in\overline{\rm e(U_k)}$. Hence, $F=\Phi\cap X$.
\end{proof}

We also need next proposition established in~\cite[proposition
3.3]{v}. Recall that a transfinite spectrum $S=\{X_\alpha,
p^{\beta}_\alpha, \alpha\leq\beta<\tau\}$ is almost continuous if
for any limit ordinal $\beta$ the space $X_\beta$ is a (dense)
subset of
$\displaystyle\lim_\leftarrow\{X_{\alpha},p^{\alpha'}_{\alpha},
\alpha\leq\alpha'<\beta\}$.

\begin{pro}\cite{v}\label{regemb}
Let $S=\{X_\alpha, p^{\beta}_\alpha, \alpha\leq\beta<\tau\}$ be a
transfinite almost continuous spectrum with nearly open bonding maps
such that $X=\displaystyle\mathrm{a}-\underleftarrow{\lim}S$. Then:
\begin{itemize}
\item[{\rm (1)}] $X$ is regularly embedded in $\prod_{\alpha<\tau}X_\alpha$;
\item[{\rm (2)}] if, additionally, each $X_\alpha$ is regularly embedded in a space $Y_\alpha$, then $X$ is regularly embedded in $\prod_{\alpha<\tau}Y_\alpha$.
\end{itemize}
\end{pro}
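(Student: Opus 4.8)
The plan is to deduce statement~(2) from~(1) by formal manipulations and to prove~(1) by transfinite induction on the length $\tau$ of the spectrum. For the deduction I would use two elementary facts about regular operators. First, they compose: if $X\subset Y\subset Z$ and $\mathrm e_1\colon\mathcal T_X\to\mathcal T_Y$, $\mathrm e_2\colon\mathcal T_Y\to\mathcal T_Z$ are regular operators, then $\mathrm e_2\circ\mathrm e_1$ is one, since $\mathrm e_2(\mathrm e_1(U))\cap X=\mathrm e_1(U)\cap X=U$ and property $(\mathrm e2)$ survives each step. Second, a product $\prod_\alpha X_\alpha$ of regularly embedded spaces $X_\alpha\subset Y_\alpha$ is regularly embedded in $\prod_\alpha Y_\alpha$: define the operator on a basic open set $\bigcap_{\alpha\in F}\pi_\alpha^{-1}(U_\alpha)$, $F$ finite, by $\bigcap_{\alpha\in F}\rho_\alpha^{-1}(\mathrm e_\alpha(U_\alpha))$, normalize representations so this is well defined, and pass to an arbitrary open $U$ by taking the union of the values on the basic open subsets of $U$. (This last device — upgrading an operator defined on a base to one satisfying $(\mathrm e1)$ and $(\mathrm e2)$ on all open sets — will be used repeatedly.) Granting~(1), the composition $X\hookrightarrow\prod_\alpha X_\alpha\hookrightarrow\prod_\alpha Y_\alpha$ is regular, which is~(2).

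For~(1), since the index set is an ordinal, finite intersections of subbasic sets $p_\alpha^{-1}(U)$ of $X$ collapse to the largest index involved; with surjectivity of the limit projections $p_\gamma\colon X\to X_\gamma$ this shows that the sets $p_\gamma^{-1}(V)$ ($\gamma<\tau$, $V$ open in $X_\gamma$) form a base of $X$, any member of which can be re-written at any larger index. When $\tau=\gamma+1$ is a successor the statement is easy: via its $\gamma$-th coordinate $X$ is homeomorphic to $X_\gamma$ and sits in $\prod_{\alpha\le\gamma}X_\alpha$ as the generalized graph $G=\{(p^\gamma_\alpha(z))_\alpha:z\in X_\gamma\}$, and $\mathrm e\bigl(G\cap\pi_\gamma^{-1}(V)\bigr):=\pi_\gamma^{-1}(V)$ is a regular operator. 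The real work is the case $\tau$ a limit ordinal, which I would handle by a transfinite induction producing, for every $\gamma\le\tau$, a regular operator $\mathrm e^{(\gamma)}\colon\mathcal T_{X_{[\gamma]}}\to\mathcal T_{\prod_{\alpha<\gamma}X_\alpha}$, where $X_{[\gamma]}$ is the image of $X$ in $\prod_{\alpha<\gamma}X_\alpha$ (the almost limit of the $\gamma$-truncated spectrum), together with a compatibility condition linking $\mathrm e^{(\gamma)}$ and $\mathrm e^{(\gamma')}$ for $\gamma\le\gamma'$. At a successor step one builds $\mathrm e^{(\gamma+1)}$ by enriching $\mathrm e^{(\gamma)}$ through the restriction map $g\colon X_{[\gamma+1]}\to X_{[\gamma]}$ (which, under the canonical identifications, is a bonding map of $S$, hence nearly open) and the newly appearing $\gamma$-th coordinate: on the set $\{z:z_\gamma\in V\}$ one intersects $\pi_\gamma^{-1}(V)$ with the $\mathrm e^{(\gamma)}$-value of a suitable open regularization of $g(V)$. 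The trace identity $\mathrm e^{(\gamma+1)}(\,\cdot\,)\cap X_{[\gamma+1]}=\{z:z_\gamma\in V\}$ holds precisely because $g(V)\subset\mathrm{Int}\overline{g(V)}$, i.e.\ because $g$ is nearly open — this is exactly the point where merely skeletal bonding maps would leave one with a $\pi$-regular, not regular, operator. At a limit step one defines $\mathrm e^{(\gamma)}$ on a basic set $p_\delta^{-1}(V)$, taken at its minimal index $\delta$, out of $\mathrm e^{(\delta+1)}$ and the remaining factors, extends to all open sets, and uses the compatibility condition to check that disjointness of two basic sets is preserved (re-written at their common index $\delta'$, this becomes disjointness of two open subsets of the single space $X_{[\delta'+1]}$, settled by regularity of $\mathrm e^{(\delta'+1)}$) and that $\mathrm e^{(\gamma)}$ is again compatible with the earlier operators. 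Setting $\gamma=\tau$ yields~(1).

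The main obstacle is organizational rather than computational: isolating a compatibility condition on the family $\{\mathrm e^{(\gamma)}\}$ that is simultaneously strong enough to push the disjointness verification through at limit stages and weak enough to be realizable at successor stages using \emph{only} near-openness. The subtlety is that the restriction maps need not be open, so the $\mathrm e^{(\gamma)}$ cannot simply be pulled back along the coordinate projections; one must weave in the near-open regularizations and verify that they do not accumulate destructively across the transfinite construction. Beyond this, the only topological inputs are the two uses of near-openness already flagged: that a nearly open map carries each open set into the interior of the closure of its image, and that the bonding maps of $S$ (hence, via the identifications used, the restriction maps between the $X_{[\gamma]}$, including the relevant identity maps) are nearly open.
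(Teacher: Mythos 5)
This proposition is not proved in the paper at all --- it is imported verbatim from \cite{v} (Proposition 3.3 there), so there is no internal proof to measure you against; I can only judge the attempt on its own terms. The parts you actually carry out are fine: the reduction of (2) to (1) via the composition lemma and the product lemma for regular operators is correct (modulo the well-definedness normalization you mention, which is routine), the successor-ordinal case of (1) is correct, and you have accurately located the one place where near-openness, as opposed to mere skeletality, is indispensable --- the exact trace identity $\mathrm{e}(\cdot)\cap X=U$ at a successor step, which degrades to density (hence only $\pi$-regularity) if $g(V)\subset\mathrm{Int}\overline{g(V)}$ is weakened.

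The problem is that part (1) for limit $\tau$ --- the entire content of the proposition, and the case actually invoked in the proof of Theorem~\ref{Main1} --- is not proved: the ``compatibility condition'' on the family $\{\mathrm{e}^{(\gamma)}\}$ that your whole transfinite scheme hangs on is never stated, and you yourself flag finding it as ``the main obstacle.'' That obstacle is the theorem, and it is not merely organizational. Concretely, at a limit stage $\gamma$ you must separate $\mathrm{e}^{(\gamma)}(q_\delta^{-1}(V))$ and $\mathrm{e}^{(\gamma)}(q_{\delta'}^{-1}(V'))$ for disjoint basic sets living at levels $\delta<\delta'$. If, as you indicate, the value at level $\delta$ is the pullback $\rho_{\delta}^{-1}(\mathrm{e}^{(\delta+1)}(B))$ along the coordinate projection, then disjointness amounts to $\sigma^{-1}(\mathrm{e}^{(\delta+1)}(B))\cap\mathrm{e}^{(\delta'+1)}(B')=\varnothing$, where $\sigma$ projects $\prod_{\alpha\le\delta'}X_\alpha$ onto $\prod_{\alpha\le\delta}X_\alpha$. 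Regularity of $\mathrm{e}^{(\delta'+1)}$ only yields $\mathrm{e}^{(\delta'+1)}(\widetilde B)\cap\mathrm{e}^{(\delta'+1)}(B')=\varnothing$ for the rewrite $\widetilde B$ of $B$ at level $\delta'$, so you would need $\sigma^{-1}(\mathrm{e}^{(\delta+1)}(B))\subset\mathrm{e}^{(\delta'+1)}(\widetilde B)$. But your successor recipe forces $\mathrm{e}^{(\delta'+1)}(\widetilde B)\subset\pi_{\delta'}^{-1}\bigl((p^{\delta'}_\delta)^{-1}(V)\bigr)$, a proper cylinder over the top coordinate, whereas $\sigma^{-1}(\mathrm{e}^{(\delta+1)}(B))$ leaves the coordinate $\delta'$ completely unconstrained; the required inclusion therefore fails whenever $(p^{\delta'}_\delta)^{-1}(V)\ne X_{\delta'}$. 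So the two natural candidates for the compatibility condition pull in opposite directions --- the one the disjointness check needs is the opposite of the one the successor step produces --- and the limit-stage definition (pullback from the minimal level, union over all levels, or something else) together with the verification of $(\mathrm{e}2)$ there is exactly the missing mathematical core. As written, the argument establishes the proposition only for successor $\tau$.
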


\section{Near-openly generated topological groups}

\subsection{Characterizations and properties}

A topological group homeomorphic to an $\mathrm{I}$-favorable space
is called {\em $\mathrm{I}$-favorable group}. Following~\cite{ArTk},
a topological group $G$ is said to be {\em $\mathbb R$-factorizable}
if for every continuous real valued function $f$ on $G$ there is a
continuous homomorphism $\pi:G\to K$ onto a second-countable
topological group $K$ and a continuous function $h$ on $K$ such that
$f=h\circ\pi$. We also say that $G$ is an {\em $\omega$-narrow
group~\cite{ArTk}} if $G$ is topologically isomorphic to a subgroup
of a product of second-countable groups.

\begin{pro}\label{IfavRfact}
Any $\mathrm{I}$-favorable topological group is $\mathbb
R$-factorizable.
\end{pro}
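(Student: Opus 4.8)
The plan is to realize $G$ as a $z$-embedded subgroup of a product of second countable topological groups and then to conclude by the stability of $\mathbb R$-factorizability under passage to $z$-embedded subgroups.

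First I would observe that $G$ is $\omega$-narrow. It is ccc: if it were not, one could fix a maximal cellular family $\{U_\eta\}$ in $G$, which is then uncountable and has dense union, and player II would defeat any strategy $\mu$ of player I in the open-open game by choosing, at stage $n$, a nonempty open $B_n\subset\mu(B_0,\dots,B_{n-1})\cap U_{\eta_n}$ (possible since $\mu(B_0,\dots,B_{n-1})$ is nonempty open and meets $\bigcup_\eta U_\eta$); then $\bigcup_n B_n$ misses $U_\eta$ for every $\eta\notin\{\eta_n:n\ge 0\}$, hence is not dense, contradicting $\mathrm I$-favorability of $G$. Since every ccc topological group is $\omega$-narrow \cite{ArTk}, $G$ is topologically isomorphic to a subgroup of a product $\Pi=\prod_{\alpha}K_\alpha$ of second countable topological groups.

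Next I would feed this concrete embedding into the machinery of Section~2. Because $G$ is $\mathrm I$-favorable, Theorem~\ref{Ifav} shows that the embedding $G\hookrightarrow\Pi$ is $\pi$-regular, and then Proposition~\ref{pi-z} shows that $G$ is $z$-embedded in $\Pi$. To finish I would invoke two standard facts about $\mathbb R$-factorizable groups \cite{ArTk},\cite{tk91}: (i) every product of second countable topological groups is $\mathbb R$-factorizable, since a continuous real-valued function on such a product depends on only countably many coordinates and the projection onto those coordinates is the required homomorphism onto a second countable group; and (ii) every $z$-embedded subgroup of an $\mathbb R$-factorizable group is $\mathbb R$-factorizable. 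Applying (i) to $\Pi$ and then (ii) to $G\leq\Pi$ gives that $G$ is $\mathbb R$-factorizable.

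The step I expect to require the most care is the upgrade to $\omega$-narrowness: Corollary~\ref{Icor} and Theorem~\ref{Ifav} only supply embeddings of $G$ into products of second countable \emph{spaces}, whereas in (ii) the ambient object must be an $\mathbb R$-factorizable \emph{group}, so one genuinely needs $G$ sitting as a topological subgroup of a product of second countable groups. Apart from that, the argument is a direct concatenation of Theorem~\ref{Ifav}, Proposition~\ref{pi-z}, and facts (i) and (ii).
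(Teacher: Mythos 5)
Your proposal is correct and follows essentially the same route as the paper: countable cellularity (hence $\omega$-narrowness) yields an embedding of $G$ as a subgroup of a product $\Pi$ of second-countable groups, Theorem~\ref{Ifav} and Proposition~\ref{pi-z} upgrade this to a $z$-embedding, and the $z$-embedding theorem for $\mathbb R$-factorizable groups \cite[Theorem 8.2.6]{ArTk} finishes the argument. The only difference is that you prove the ccc property of $\mathrm I$-favorable spaces inline via the open-open game, where the paper simply cites \cite{dkz}.
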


\begin{proof}
Any $\mathrm{I}$-favorable space has countable
cellularity~\cite{dkz}. Therefore, $G$ is an $\omega$-narrow
group~\cite[theorem 3.4.7]{ArTk}. So, $G$ is topologically
isomorphic to a subgroup of a product $\Pi$ of second-countable
groups. By theorem~\ref{Ifav} the embedding of $G$ in $\Pi$ is
$\pi$-regular and, by proposition~\ref{pi-z}, it is a $z$-embedding
into the $\mathbb R$-factorizable group $\Pi$. Therefore, according
to~\cite[theorem 8.2.6]{ArTk}, $G$ is an $\mathbb R$-factorizable
group.
\end{proof}

\begin{thm}\label{Main}
The following are equivalent for a topological group $G$:
\begin{itemize}
\item[{\rm (1)}] $G$ is near-openly generated;
\item[{\rm (2)}] $G$ is $\mathrm{I}$-favorable;
\item[{\rm (3)}] $G$ has a $\sigma$-lattice of skeletal maps;
\item[{\rm (4)}] $G$ has a $\sigma$-lattice of nearly open homomorphisms.
\end{itemize}
\end{thm}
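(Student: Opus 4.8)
The plan is to establish the cycle of implications $(1)\Rightarrow(2)\Rightarrow(3)\Rightarrow(4)\Rightarrow(1)$, leaning heavily on the machinery for skeletally generated spaces developed in Section 2 and on the spectral material of Section 4. The implication $(1)\Rightarrow(2)$ is essentially immediate: if $G=\mathrm{a}\text{-}\lim_\leftarrow S_G$ where $S_G$ is a factorizing almost continuous $\sigma$-spectrum with continuous nearly open bonding homomorphisms, then in particular $S_G$ is an almost continuous $\sigma$-spectrum whose bonding maps are skeletal (every nearly open surjection is skeletal), so $G$ is skeletally generated and hence $\mathrm{I}$-favorable by Theorem~\ref{Ifav}. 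For $(2)\Rightarrow(3)$, by Theorem~\ref{Ifav} an $\mathrm{I}$-favorable $G$ is skeletally generated, and then Proposition~\ref{lattice} (implication $(1)\Rightarrow(2)$ there) hands us a $\sigma$-lattice of skeletal maps on the underlying space of $G$; no group structure is needed for this step.

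The substantive step is $(3)\Rightarrow(4)$: upgrading a $\sigma$-lattice of skeletal maps to a $\sigma$-lattice of \emph{nearly open homomorphisms}. Here the group structure must finally be used. Starting from a $\sigma$-lattice $\Psi$ of skeletal maps $\varphi:G\to\varphi(G)$ into second-countable spaces, I would first replace each $\varphi$ by the homomorphism $q_\varphi:G\to G/N_\varphi$, where $N_\varphi$ is a suitable closed normal subgroup refining the fibre partition of $\varphi$ so that $q_\varphi$ factors $\varphi$ and $G/N_\varphi$ is still second countable (such $N_\varphi$ exists because $G$ is $\omega$-narrow, hence $\mathbb R$-factorizable by Proposition~\ref{IfavRfact}, so countably many continuous functions can be factored through a second-countable quotient group). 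One then checks that the collection of these quotient homomorphisms (closed under countable diagonal products and cofinal among maps to second-countable spaces) forms a $\sigma$-lattice; the diagonal product of countably many such quotient homomorphisms is again a quotient homomorphism onto $G/\bigcap N_{\varphi_n}$, which is second countable, giving (L1), while (L2) follows from $\mathbb R$-factorizability together with the fact that any second-countable quotient of $G$ refining a given map can be interpolated. The remaining point is that each $q_\varphi$ is \emph{nearly open}: since $\varphi$ is skeletal and factors through $q_\varphi$, and $q_\varphi$ is an open continuous surjection onto $G/N_\varphi$ (quotient homomorphisms onto topological groups are open), one gets that $q_\varphi$ is in fact open, hence nearly open. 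Actually I expect the cleanest route is to observe that quotient homomorphisms onto topological groups are automatically open, so the only real content is arranging that the quotients are second countable and that the family is a $\sigma$-lattice; the skeletal hypothesis is what guarantees, via the spectral theorem Proposition~\ref{lemmain}, that the limit recovers $G$.

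Finally, $(4)\Rightarrow(1)$: given a $\sigma$-lattice $\Psi$ of nearly open homomorphisms on $G$, apply the spectral theorem of the form spectrum--lattice (Proposition~\ref{lemmain}, with the accompanying results of Section 4) to produce an almost continuous $\sigma$-spectrum $S_G=\{G_\alpha,p^\beta_\alpha,A\}$ of second-countable topological groups with nearly open bonding homomorphisms, indexed by a $\sigma$-complete set, such that $G=\mathrm{a}\text{-}\lim_\leftarrow S_G$; the factorizing property of the spectrum is exactly condition (L2) of the $\sigma$-lattice (every real-valued continuous function, being factored through a second-countable quotient, is factored through some $G_\alpha$), so $S_G$ witnesses that $G$ is near-openly generated. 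The main obstacle I anticipate is the bookkeeping in $(3)\Rightarrow(4)$ — verifying that the passage from skeletal maps to quotient homomorphisms can be done coherently across the whole lattice (preservation under countable diagonal products and the cofinality condition) while keeping all quotient groups second countable; this is where $\omega$-narrowness and $\mathbb R$-factorizability (Proposition~\ref{IfavRfact}) do the essential work.
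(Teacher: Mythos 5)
Your implications $(1)\Rightarrow(2)$, $(2)\Rightarrow(3)$ and $(4)\Rightarrow(1)$ are correct and coincide with the paper's argument (for the last one the paper invokes Proposition~\ref{sp-lat}; your appeal to the spectral machinery of Section~4 amounts to the same thing). The problem is in $(3)\Rightarrow(4)$, which is indeed the only substantive step, and there your plan has a genuine gap.

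You propose to replace each skeletal map $\varphi$ by a quotient homomorphism $q_\varphi\colon G\to G/N_\varphi$ and then argue that $q_\varphi$ is \emph{open} because quotient homomorphisms of topological groups are open. But $\mathbb R$-factorizability only supplies a \emph{continuous} homomorphism $\pi\colon G\to K$ onto a second-countable group through which $\varphi$ factors; $\pi$ need not be open, and the topology of $K$ need not be the quotient topology of $G/\ker\pi$. If you force the quotient topology on $G/N_\varphi$ to make the map open, you lose control of second countability; if you keep the second-countable image, you lose openness. The two requirements cannot in general be met simultaneously: if your argument worked it would show that every near-openly generated group has a $\sigma$-lattice of \emph{open} homomorphisms onto second-countable groups, a strictly stronger property that the paper deliberately isolates as an extra hypothesis in its examples (a dense subgroup such as $C_p(X)\subset\mathbb R^X$ is near-openly generated via the restricted projections, which are nearly open but not open). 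The idea you are missing is the combination of two facts: (i) by $\mathbb R$-factorizability the family $\Psi_2$ of \emph{all} continuous homomorphisms of $G$ onto second-countable groups is itself a $\sigma$-lattice, so by the intersection-of-lattices result (Proposition~\ref{propspectr}) the family $\Psi_1\cap\Psi_2$ is a $\sigma$-lattice consisting of homomorphisms that are skeletal maps; and (ii) a continuous homomorphism of topological groups is skeletal if and only if it is nearly open (\cite[Lemma 4.3.29]{ArTk}). This yields the required $\sigma$-lattice of nearly open homomorphisms without ever needing the maps to be open, and it is precisely where the skeletal hypothesis from $(3)$ is consumed --- not, as you suggest, merely in recovering $G$ as a limit.
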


\begin{proof}
$(1)\Rightarrow (2)$. The space of a near-openly generated group is
skeletally generated. So, by theorem~\ref{Ifav}, $G$ is
$\mathrm{I}$-favorable.

$(2)\Rightarrow (3)$. Suppose $G$ is $\mathrm{I}$-favorable.
Consequently it is skeletally generated, and according to
proposition~\ref{lattice}, $G$ has a $\sigma$-lattice consisting of
skeletal maps.

$(3)\Rightarrow (4)$. Suppose $G$ has a $\sigma$-lattice $\Psi_1$ of
skeletal maps. Then, by proposition~\ref{lattice} and
theorem~\ref{Ifav}, $G$ is $\mathrm{I}$-favorable. So, $G$ is
$\mathbb R$-factorizable group (see proposition ~\ref{IfavRfact}).
Hence, the family $\Psi_2$ of all continuous homomorphisms from $G$
onto second-countable groups is a $\sigma$-lattice.  Thus, by
proposition~\ref{propspectr}, the intersection $\Psi$ of these two
$\sigma$-lattices is a $\sigma$-lattice consisting of continuous
homomorphisms which are skeletal maps. Finally, because skeletal
homomorphism is nearly open~\cite[lemma 4.3.29]{ArTk}, $\Psi$ is a
$\sigma$-lattice of homomorphisms which are nearly open maps.

$(4)\Rightarrow (1)$ follows from proposition~\ref{sp-lat}.
\end{proof}

Proposition~\ref{proIfav} and theorem~\ref{Main} provide the
following properties of near-openly generated groups.

\begin{pro}\label{cor}
Let $G$ be a near-openly generated group. Then:
\begin{itemize}
\item every dense subgroup of group $G$ is also near-openly generated;
\item every topological group containing $G$ as a dense subgroup $($ in particular, the Raikov completion of $G$$)$ is near-openly generated;
\item every coset space of $G$ is skeletally generated $($in particular,
any quotient group of $G$ is near-openly generated$)$;
\item a product of near-openly generated groups is near-openly generated.
\end{itemize}
\end{pro}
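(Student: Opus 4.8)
The plan is to reduce every item to the purely topological statements in Proposition~\ref{proIfav}, using the equivalence $(1)\Leftrightarrow(2)$ of Theorem~\ref{Main}, which identifies near-openly generated groups with $\mathrm{I}$-favorable groups (i.e.\ topological groups homeomorphic to an $\mathrm{I}$-favorable space). Throughout I would note that each group occurring below — a dense subgroup of $G$, a group containing $G$ as a dense subgroup, a quotient of $G$, a product of such groups — carries the subspace, respectively product, topology, so the topological facts about $\mathrm{I}$-favorability transfer directly to it.

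For the first item, if $H$ is a dense subgroup of $G$, then $G$ is $\mathrm{I}$-favorable by Theorem~\ref{Main}, hence so is $H$ by the second bullet of Proposition~\ref{proIfav}; applying Theorem~\ref{Main} once more, $H$ is near-openly generated. The second item is the symmetric statement: if $G$ is a dense subgroup of a topological group $G'$, then $G'$ is $\mathrm{I}$-favorable by the third bullet of Proposition~\ref{proIfav}, hence near-openly generated; the Raikov completion of $G$ contains $G$ as a dense subgroup, so it is a special case.

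For the third item, let $H$ be a closed subgroup of $G$ and consider the canonical map $q\colon G\to G/H$. It is continuous, surjective and open (a standard property of coset spaces of topological groups, see~\cite{ArTk}); in particular $q$ is skeletal, since $q(U)$ is nonempty open whenever $U$ is. As $G$ is $\mathrm{I}$-favorable, the fourth bullet of Proposition~\ref{proIfav} yields that $G/H$ is $\mathrm{I}$-favorable, i.e.\ skeletally generated by Theorem~\ref{Ifav}; when $H$ is normal, $G/H$ is a topological group and hence near-openly generated by Theorem~\ref{Main}. The fourth item follows at once: each factor is $\mathrm{I}$-favorable by Theorem~\ref{Main}, a product of $\mathrm{I}$-favorable spaces is $\mathrm{I}$-favorable by the last bullet of Proposition~\ref{proIfav}, and the product group is then near-openly generated by Theorem~\ref{Main} again.

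Since every step is a direct appeal to an already-established result, there is essentially no obstacle here; the only point deserving a remark is item (3), where a coset space need not be a group — which is why the conclusion there is only \emph{skeletally generated} rather than \emph{near-openly generated} — and where one must invoke the openness (hence skeletality) of the coset projection in order to apply Proposition~\ref{proIfav}.
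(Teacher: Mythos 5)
Your proposal is correct and follows exactly the route the paper intends: the paper justifies this proposition with the single remark that Proposition~\ref{proIfav} and Theorem~\ref{Main} together yield all four items, and your argument simply spells out which bullet of Proposition~\ref{proIfav} handles which item (including the correct observation that the open coset projection is skeletal, so bullet four applies, and that a coset space need not be a group, which is why that item only claims skeletal generation). No gaps.
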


Next proposition provides another interesting property of
near-openly generated groups.

\begin{pro}
Let $G$ be a near-openly generated group with $\displaystyle
G=a-\underleftarrow{\lim} S$, where $S$ is a factorizing almost
continuous $\sigma$-spectrum. Then spectrum $S$ has a cofinal
subspectrum which is an almost continuous $\sigma$-spectrum
consisting of groups and nearly open homomorphisms.
\end{pro}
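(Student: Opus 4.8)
The plan is to combine the topological characterization of near-openly generated groups in Theorem~\ref{Main} with the hypothesis that $S$ is factorizing. By Theorem~\ref{Main}, $G$ is $\mathrm I$-favorable, hence $\mathbb R$-factorizable (Proposition~\ref{IfavRfact}); moreover, as in the proof of the implication $(3)\Rightarrow(4)$ of Theorem~\ref{Main} — intersecting a $\sigma$-lattice of skeletal maps (Proposition~\ref{lattice}) with the $\sigma$-lattice of all continuous homomorphisms of $G$ onto second-countable groups, by means of Proposition~\ref{propspectr} and \cite[Lemma~4.3.29]{ArTk} — the group $G$ carries a $\sigma$-lattice $\Psi$ all of whose members are continuous nearly open homomorphisms onto second-countable groups. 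Write $S=\{X_\alpha,p^\beta_\alpha,A\}$ with limit projections $p_\alpha\colon G\to X_\alpha$; since $G=\mathrm{a}-\underleftarrow{\lim}S$, every $p_\alpha$ is surjective. I write $\varphi\sim\varphi'$ when each of the two maps factors through the other; for maps that are onto their ranges this means the ranges are homeomorphic via a homeomorphism identifying the two maps.

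First I would reduce the statement to a statement about the index set. Call $\alpha\in A$ \emph{good} if $p_\alpha\sim h$ for some $h\in\Psi$; for such $\alpha$ there is a homeomorphism $u_\alpha$ of $X_\alpha$ onto the range $K_\alpha$ of $h$ with $h=u_\alpha\circ p_\alpha$, so transporting the group structure of $K_\alpha$ to $X_\alpha$ along $u_\alpha$ makes $X_\alpha$ a second-countable topological group and $p_\alpha$ a continuous nearly open homomorphism. If $\alpha\le\beta$ are good, then $p_\alpha=p^\beta_\alpha\circ p_\beta$ together with surjectivity of $p_\beta$ forces $p^\beta_\alpha$ to be a homomorphism, and for open $V\subseteq X_\beta$ one has $p^\beta_\alpha(V)=p_\alpha\bigl(p_\beta^{-1}(V)\bigr)\subseteq\mathrm{Int}\,\overline{p_\alpha\bigl(p_\beta^{-1}(V)\bigr)}=\mathrm{Int}\,\overline{p^\beta_\alpha(V)}$, so $p^\beta_\alpha$ is nearly open. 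Hence it is enough to exhibit a cofinal subset $A_0\subseteq A$ consisting of good indices which is moreover $\sigma$-closed in $A$: the restricted spectrum $\{X_\alpha,p^\beta_\alpha,A_0\}$ then inherits $\sigma$-completeness of the index set and condition~(3) from $S$, still has $G$ as its almost limit, and is therefore the required cofinal subspectrum of groups and nearly open homomorphisms.

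The heart of the matter is to produce $A_0$, and I would take $A_0=A_1:=\{\alpha\in A:\alpha\ \text{is good}\}$. That $A_1$ is $\sigma$-closed follows from property (L1) of $\Psi$ together with condition~(3) of $S$: if $\alpha_1\le\alpha_2\le\cdots$ in $A_1$ with supremum $\beta$, then choosing $h_n\in\Psi$ with $h_n\sim p_{\alpha_n}$ for each $n$ we get $p_\beta\sim\triangle_n p_{\alpha_n}\sim\triangle_n h_n\in\Psi$, so $\beta\in A_1$. For cofinality of $A_1$ one carries out a back-and-forth construction of spectral-theorem type: given $\alpha^*\in A$, build a tower of maps $p_{\alpha^*},h_1,p_{\beta_1},h_2,p_{\beta_2},\dots$, each refined by the next, where every $h_n\in\Psi$ is produced by applying (L2) to the previous term and every $p_{\beta_n}$ is produced by applying the factorizing property of $S$ to $h_n$, the indices being chosen so that $\alpha^*\le\beta_1\le\beta_2\le\cdots$ in $A$; then $\triangle_n h_n\in\Psi$ by (L1), $\triangle_n h_n\sim\triangle_n p_{\beta_n}\sim p_\beta$ with $\beta=\sup_n\beta_n\ge\alpha^*$ (the last equivalence by condition~(3)), so $\beta\in A_1$. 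This construction is exactly an instance of the spectrum–lattice spectral theorem, so one may instead invoke Proposition~\ref{lemmain} applied to $S$ and the $\sigma$-lattice $\Psi$.

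The step I expect to be the main obstacle is the interleaving in the previous paragraph: one must keep the auxiliary maps $h_n$ inside $\Psi$ — which is precisely why property (L1) is invoked and why one must verify that the tower genuinely increases — while simultaneously forcing the indices $\beta_n$ to increase in $A$ without destroying the tower, so that $\beta=\sup_n\beta_n$ exists (by $\sigma$-completeness of $A$) and $p_\beta$ is, up to $\sim$, the diagonal product of the $p_{\beta_n}$ (by condition~(3)). The remaining ingredients — the transport of group structures, the near-openness of the bonding maps $p^\beta_\alpha$, and the fact that the spectrum axioms pass to the restriction over $A_0$ — are routine.
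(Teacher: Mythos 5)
Your proposal is correct and follows essentially the same route as the paper: the paper's proof simply takes the $\sigma$-lattice of nearly open homomorphisms supplied by Theorem~\ref{Main}(4) and invokes Proposition~\ref{lemmain}, which is exactly the reduction you identify at the end of your argument. The additional work you do (transporting group structures to the $X_\alpha$, checking near-openness of the bonding maps between ``good'' indices, and the interleaving construction) is just an unwinding of the details the paper delegates to Proposition~\ref{lemmain}, and it is carried out correctly.
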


\begin{proof}
Since there exists a $\sigma$-lattice of nearly open homomorphisms
on $G$, we apply proposition~\ref{lemmain} to find a cofinal
subspectrum of $S$ with the required properties.
\end{proof}

Our next aim is to establish the external characterization of
near-openly generated groups stated in theorem~\ref{Main1}.

\begin{pro}\label{trsp}
Let $G$ be a near-openly generated topological group of uncountable
weight $\tau$, $\lambda=\mathrm{cf}(\tau)$. Then there is an almost
continuous transfinite spectrum $\displaystyle S_G=\{G_\gamma,
p^{\delta}_\gamma, \gamma\leq\delta<\lambda\}$ of near-openly
generated topological groups $G_\gamma$ and nearly open
homomorphisms $p^{\delta}_\gamma$, such that $w(G_\gamma)<\tau$ for
each $\gamma$ and $G=\mathrm{a}-\displaystyle\lim_\leftarrow S_G$.
\end{pro}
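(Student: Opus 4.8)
The plan is to build the spectrum $S_G$ by "slicing" a $\sigma$-lattice of nearly open homomorphisms on $G$ into layers indexed by an increasing cofinal sequence of cardinals below $\tau$. By Theorem~\ref{Main}, since $G$ is near-openly generated it has a $\sigma$-lattice $\Psi$ of nearly open homomorphisms onto second-countable groups. First I would fix an increasing sequence of infinite cardinals $\{\tau_\gamma : \gamma<\lambda\}$, $\lambda=\mathrm{cf}(\tau)$, with $\sup_\gamma\tau_\gamma=\tau$ and $\tau_\gamma<\tau$ for each $\gamma$. The idea is then to produce, for each $\gamma$, a subfamily $\Psi_\gamma\subset\Psi$ of size $\tau_\gamma$ which is itself a $\sigma$-lattice (closed under the countable diagonal products of condition (L1)) and such that the diagonal product $q_\gamma=\triangle\Psi_\gamma$ has the property that $\{q_\gamma\}$ is "cofinal" in $\Psi$ as $\gamma$ ranges over $\lambda$. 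Setting $G_\gamma=q_\gamma(G)$ and letting $p^\delta_\gamma$ be the canonical bonding homomorphism $G_\delta\to G_\gamma$ induced by the inclusion $\Psi_\gamma\subset\Psi_\delta$ (which exists because $q_\gamma\prec q_\delta$), we obtain the desired spectrum.

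The construction of the chain $\{\Psi_\gamma\}$ proceeds by transfinite recursion: enumerate $\Psi$ (which has cardinality at most $w(G)=\tau$) as $\{\psi_\xi:\xi<\tau\}$, partition the index set $\tau$ into an increasing union of blocks of size $\tau_\gamma$, and at stage $\gamma$ close $\bigcup_{\delta<\gamma}\Psi_\delta$ together with the $\gamma$-th block under countable diagonal products using (L1), keeping the cardinality at $\tau_\gamma$ (a standard Löwenheim–Skolem-type closure, legitimate because each new diagonal product is determined by a countable subfamily). Since $\Psi_\gamma$ is closed under (L1) and, being a $\sigma$-lattice restricted to $G$, every map from $G$ onto a second countable group factors through some $\psi_\xi\in\Psi$, hence through some $q_\delta$ — so condition (L2) holds in the limit. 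Thus $\Psi_\gamma$ is a $\sigma$-lattice on $G$, and by Theorem~\ref{Ifav} together with Proposition~\ref{lattice}, $G_\gamma=q_\gamma(G)$ is skeletally generated; since $q_\gamma$ is a homomorphism, $G_\gamma$ is a topological group, and because nearly open homomorphisms onto second countable groups generate it, $G_\gamma$ is near-openly generated with $w(G_\gamma)\le\tau_\gamma<\tau$.

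It remains to verify that the resulting spectrum is almost continuous with nearly open bonding maps and that $G=\mathrm{a}\text{-}\varprojlim S_G$. Almost continuity at limit ordinals $\beta<\lambda$ follows from the construction: $\Psi_\beta=\bigcup_{\gamma<\beta}\Psi_\gamma$ up to the (L1)-closure, so $G_\beta=q_\beta(G)$ embeds densely in $\varprojlim\{G_\gamma,p^{\gamma'}_\gamma,\gamma\le\gamma'<\beta\}$ — this is exactly the compatibility of the diagonal product with the subfamilies. The bonding maps $p^\delta_\gamma$ are nearly open: this is where I would invoke the spectral/lattice machinery — a bonding map between consecutive quotients in such a lattice-generated spectrum is skeletal (cf. the argument behind Proposition~\ref{lattice} and \cite[Lemma 4.3.29]{ArTk} identifying skeletal homomorphisms with nearly open ones). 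Finally, $p_\gamma=q_\gamma$ shows $p_\gamma(G)=G_\gamma$, so $G$ is the almost limit. I expect the main obstacle to be the bookkeeping that makes the layers $\Psi_\gamma$ simultaneously $\sigma$-lattices, of the right cardinality, nested with induced \emph{surjective} nearly open bonding maps, and cofinal enough that no continuous real-valued function on $G$ (equivalently, no second-countable quotient) escapes the spectrum — essentially reproving a relativized version of the spectral theorem of Section~4 with a prescribed cofinality constraint; alternatively one could try to derive the statement directly from Proposition~\ref{lemmain} applied to a factorizing spectrum for $G$ whose index set has been refined to have cofinality $\lambda$.
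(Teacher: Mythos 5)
Your overall architecture (an increasing chain of subfamilies of a $\sigma$-lattice of nearly open homomorphisms, with $G_\gamma$ the image of the diagonal product $q_\gamma=\triangle\Psi_\gamma$) is a natural first attempt, but it has a genuine gap exactly at the point you defer to ``the spectral/lattice machinery'': nothing in the lattice axioms guarantees that the \emph{uncountable} diagonal products $q_\gamma$, or the induced bonding maps $p^\delta_\gamma$, are skeletal (equivalently, nearly open). Conditions $(\mathrm{L}1)$--$(\mathrm{L}2)$ only ensure that diagonal products of \emph{countable} decreasing chains stay inside $\Psi$ and hence remain nearly open; for a subfamily $\Psi_\gamma$ of size $\tau_\gamma>\omega$, each coordinate $\psi\in\Psi_\gamma$ being skeletal does not imply that $\overline{q_\gamma(U)}$ has nonempty interior in $G_\gamma$ --- a nonempty basic open set of $G_\gamma$ determined by a finer $\psi'\in\Psi_\gamma$ may sit inside $(\pi_\psi|_{G_\gamma})^{-1}\bigl(\mathrm{Int}\,\overline{\psi(U)}\bigr)$ while missing $\psi'(U)$ entirely, and since the bonding maps of the transfinite spectrum reduce to the maps $q_\gamma$ (because $p^\delta_\gamma(W)=q_\gamma(q_\delta^{-1}(W))$), this is not bookkeeping but the mathematical core of the proposition. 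This is precisely why the paper does not argue inside the lattice: it embeds $G$ into a product of second-countable groups via $\mathbb R$-factorizability, passes to metrizable compactifications of the factors, takes a $\pi$-regular operator $\mathrm{e}$ on the closure $\overline G$ of $G$ in that product, and builds the index sets $B_\delta$ to be $\mathrm{e}$-admissible --- a closure condition specifically engineered so that $\pi_{B_\delta}|_{\overline G}$ (hence $p_{B_\delta}$) is skeletal, and which is preserved under increasing unions of arbitrary cofinality while being attainable above any set without raising its cardinality. Your L\"owenheim--Skolem closure under $(\mathrm{L}1)$ is not a substitute for $\mathrm{e}$-admissibility.

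Two secondary problems: the assertion that each $\Psi_\gamma$ ``is a $\sigma$-lattice on $G$'' cannot be right, since a $\sigma$-lattice on $G$ must satisfy $(\mathrm{L}2)$ and therefore generate the topology of $G$, forcing $w(G)\le\tau_\gamma\cdot\omega<\tau$ (at best the family induced on $G_\gamma$ is a $\sigma$-lattice \emph{on $G_\gamma$}, and even that requires first knowing the induced maps are skeletal and that the resulting spectrum on $G_\gamma$ is factorizing). Also, almost continuity at limit ordinals of countable cofinality needs care, since an increasing union of $(\mathrm{L}1)$-closed families need not be $(\mathrm{L}1)$-closed, so $\Psi_\beta$ ``up to the $(\mathrm{L}1)$-closure'' may strictly exceed $\bigcup_{\gamma<\beta}\Psi_\gamma$. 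Your fallback suggestion of deriving the statement from Proposition~\ref{lemmain} also does not work as stated: that proposition produces cofinal $\sigma$-subspectra under countable directedness, not transfinite chains of length $\mathrm{cf}(\tau)$ with entries of weight below $\tau$.
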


\begin{proof}
According to theorem~\ref{Main} and~\cite[lemma 4.3.29]{ArTk}, it
suffices to find such a spectrum $\displaystyle S_G=\{G_\gamma,
p^{\delta}_\gamma, \gamma\leq\delta<\lambda\}$, where
$\lambda=\mathrm{cf}(\tau)$, of $\rm I$-favorable groups $G_\gamma$
and skeletal homomorphisms $p^{\delta}_\gamma$ such that
$w(G_\gamma)<\tau$ for each $\gamma$ and
$G=\mathrm{a}-\displaystyle\lim_\leftarrow S_G$. Since, by
proposition~\ref{IfavRfact}, $G$ is $\mathbb R$-factorizable, $G$ is
topologically isomorphic to a subgroup of the product of a family of
second-countable topological groups $G_\alpha$, $\alpha\in A$, with
$|A|=\tau$. For each $\alpha$ let $\overline{G}_\alpha$ be a
metrizable compactification of $G_\alpha$ and $A$ be the union of an
increasing transfinite family $\{A_\delta\}_{\delta<\lambda}$ with
$|A_\delta|<\tau$ for each $\delta<\lambda$. Then the closure
$\overline G$ of $G$ in the product $H=\prod_{\alpha\in
A}\overline{G}_\alpha$, being a compactification of $G$, is
$\mathrm{I}$-favorable (proposition~\ref{proIfav}). So, there is a
$\pi$-regular operator $\mathrm{e}:\Tee_{\overline G}\to\Tee_H$, see
theorem~\ref{Ifav}.

For every subset $B\subset A$ denote by $\pi_B:H\to
H_B=\prod_{\alpha\in B}\overline{G}_\alpha$ the projection,
$\widetilde G_B=\pi_B(\overline G)$ and $G_B=p_B(G)$, where
$p_B=\pi_B|_G$. We also fix a standard open base $\mathcal B_B$ for
$H_B$ of cardinality $\max\{|B|, \aleph_0\}$, $B\subset A$. For each
$U\in\mathcal B_B$ there is a countable set $k(U)\subset A$ with
$$\pi^{-1}_{k(U)}(\pi_{k(U)}(\overline{\mathrm{e}(\pi_B^{-1}(U)\cap
G)}))=\overline{\mathrm{e}(\pi_B^{-1}(U)\cap G)}.$$ This can be done
because each $\overline{\mathrm{e}(\pi_B^{-1}(U)\cap G)}$ is a
zero-set in $H$ and every continuous function on $H$ depends on
countably many coordinates. Following the proof of
proposition~\ref{lattice}, let $\mathcal A$ the family of all $\rm
e$-admissible subsets of $A$. Recall that the restriction
$\pi_B|_{\overline G}:\overline G\to\widetilde G_B$ is a skeletal
map for each $B\in\mathcal A$ and for every $B\subset A$ there is a
set $B_\infty\in\mathcal A$ containing $B$ with
$|B_\infty|=\max\{|B|, \aleph_0\}$, see the proof of
proposition~\ref{lattice}, $(1)\Rightarrow (2)$.

Next, we construct by transfinite induction an increasing family
$\{B_\delta\}_{\delta<\lambda}$ of $\mathrm{e}$-admissible sets
$B_\delta\subset A$ satisfying the following conditions:
\begin{itemize}
\item[(1)] $A_\delta\subset B_\delta$ and $|B_\delta|=|A_\delta|$;
\item[(2)] $B_\delta=\bigcup_{\gamma<\delta}B_\gamma$ provided $\delta$ is a limit ordinal.
\end{itemize}
So, $A$ is the union of an increasing transfinite family
$\{B_\delta\}_{\delta<\lambda}$ of $\mathrm{e}$-admissible sets.
Hence, the spectrum $\displaystyle S_G=\{G_\gamma,
p^{\delta}_\gamma, \gamma\leq\delta<\lambda\}$ is almost continuous
and $G=\mathrm{a}-\displaystyle\lim_\leftarrow S_G$, where
$G_\gamma=G_{B_\gamma}$ and $p^{\delta}_\gamma$ is the restriction
of the projection $\pi^{B_\delta}_{B_\gamma}:H_{B_\delta}\to
H_{B_\gamma}$ on $G_{B_\delta}$. Since each set $B_\gamma$ is
$\mathrm{e}$-admissible, the restrictions
$\pi_{B_\gamma}|\overline{G}:\overline G\to\widetilde G_{B_\gamma}$
are skeletal, so are the homomorphisms $p_{\gamma}:G\to G_{\gamma}$
since $G$ is a dense subset of $\overline{G}$. This implies that
each $G_{\gamma}$ is $\mathrm{I}$-favorable (as a skeletal image of
$G$), and the bonding homomorphisms $p^{\delta}_\gamma$ are also
skeletal.
\end{proof}

Now we can establish the external characterization of near-openly
generated groups.

\begin{thm}\label{Main1}
The following are equivalent for a topological group $G$:
\begin{itemize}
\item[{\rm (1)}] $G$ is near-openly generated;
\item[{\rm (2)}] $G$ is topologically isomorphic to a subgroup of a product of second-countable topological groups and any such an embedding is
$\pi$-regular;
\item[{\rm (3)}] $G$ is topologically isomorphic to a subgroup of a product of second-countable topological groups and any such an embedding is
regular.
\end{itemize}
\end{thm}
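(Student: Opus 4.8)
The plan is to prove the three statements equivalent by a cycle $(1)\Rightarrow(3)\Rightarrow(2)\Rightarrow(1)$, using the spectral representation provided by Proposition~\ref{trsp} together with the topological facts already established in Section~2. For $(1)\Rightarrow(3)$, I would first dispose of the countable-weight case (where $G$ is second countable and embeds as a factor, the embedding being regular via the obvious operator $e(U)=\pi^{-1}(U)$ after checking density). In the uncountable-weight case, suppose $G$ is near-openly generated and is topologically isomorphic to a subgroup of a product $\Pi=\prod_{\alpha\in A}N_\alpha$ of second-countable groups. Since $G$ is $\mathbb R$-factorizable (Proposition~\ref{IfavRfact}) and $\omega$-narrow, apply Proposition~\ref{trsp} to write $G=\mathrm{a}\text{-}\lim_\leftarrow S_G$ for an almost continuous spectrum $S_G=\{G_\gamma,p^\delta_\gamma,\gamma\le\delta<\lambda\}$ of near-openly generated groups $G_\gamma$ of smaller weight with nearly open bonding homomorphisms. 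By transfinite induction on $\tau=w(G)$, each $G_\gamma$ satisfies $(3)$, hence is regularly embedded in the corresponding sub-product of the second-countable groups. Then Proposition~\ref{regemb}(2) yields that $G$ is regularly embedded in $\prod_{\alpha<\lambda}Y_\alpha$; one has to reconcile this ambient product with the given $\Pi$, which is done by observing that a regular embedding into one product of second-countable groups transfers to any other such product via the standard trick of factoring coordinate projections through finite sub-products and using that the two product topologies agree on $G$.

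The implication $(3)\Rightarrow(2)$ is immediate, since every regular operator is in particular a $\pi$-regular operator ($e(U)\cap X=U$ is dense in $U$, and condition $(\mathrm e2)$ is literally part of the definition of regular).

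For $(2)\Rightarrow(1)$, assume $G$ is $\omega$-narrow and every embedding of $G$ into a product of second-countable groups is $\pi$-regular. In particular, $G$ is topologically isomorphic to a dense subgroup of its Raikov completion $\widehat G$, which is again $\omega$-narrow and embeds in a product of second-countable groups, and the given embedding of $G$ is $\pi$-regular as an embedding of topological spaces into a product of second-countable spaces. By Corollary~\ref{Icor}, $G$ is $\mathrm I$-favorable. Finally, Theorem~\ref{Main} gives that an $\mathrm I$-favorable topological group is near-openly generated, which closes the cycle.

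I expect the main obstacle to be the bookkeeping in $(1)\Rightarrow(3)$: matching the internally constructed product $\prod_{\alpha<\lambda}Y_\alpha$ coming from Proposition~\ref{regemb} with an \emph{arbitrary} prescribed product of second-countable groups in which $G$ sits, and checking that the regular operator survives this change of ambient product. The key point is that regularity of an embedding $G\hookrightarrow\Pi$ into a product of second-countable spaces is an intrinsic property of $G$ (this is essentially the content of Theorem~\ref{Ifav} and Corollary~\ref{Icor} for the $\pi$-regular case, and one needs the analogue for regular operators); so one should state and use that a group which is regularly embedded in \emph{some} product of second-countable groups is regularly embedded in \emph{every} such product, reducing the verification to the transfer lemma already implicit in the proof of Proposition~\ref{lattice}. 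The induction itself is routine once this transfer principle is in place, since $\mathrm{cf}(\tau)<\tau$ for $\tau$ not regular is handled by the spectrum, and the regular-cardinal case is handled by taking a cofinal family of sub-products of size $<\tau$.
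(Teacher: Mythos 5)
Your overall strategy coincides with the paper's: the easy implications are routed through $\mathrm I$-favorability (Corollary~\ref{Icor} and Theorem~\ref{Main}), and the substantive implication producing a regular operator is proved by transfinite induction on $w(G)$ using the spectrum of Proposition~\ref{trsp} together with Proposition~\ref{regemb}(2). The paper runs the cycle as $(1)\Rightarrow(2)\Rightarrow(3)\Rightarrow(1)$ with the induction placed in $(2)\Rightarrow(3)$, but since $(1)$ and $(2)$ are already known to be equivalent to $\mathrm I$-favorability, this is only a relabeling of your $(1)\Rightarrow(3)$.

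There are, however, two concrete soft spots. First, your base case is wrong as written: a second-countable subgroup $G$ of $\Pi=\prod_{\alpha\in\Gamma}G_\alpha$ is not ``embedded as a factor,'' and $e(U)=\pi^{-1}(U)$ is not even defined, since $U$ is open in $G$ rather than in any coordinate group. The correct argument (the paper's) chooses a countable $A\subset\Gamma$ and a countable base $\mathcal B$ of $G$ with $\pi_A|_G$ an isomorphism onto $G_A$ and $\pi_A^{-1}(\pi_A(V))\cap G=V$ for all $V\in\mathcal B$, uses the existence of a regular operator $\mathrm e_A\colon\Tee_{G_A}\to\Tee_{\Pi_A}$ into the second-countable space $\Pi_A$, and sets $\mathrm e(U)=\bigcup\{\pi_A^{-1}(\mathrm e_A(\pi_A(V))):V\in\mathcal B,\ V\subset U\}$. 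Second, the ``transfer principle'' you invoke --- that a regular embedding into one product of second-countable groups transfers to every other such product --- is essentially statement $(3)$ itself, so using it as a black box is circular, and the analogue of Theorem~\ref{Ifav} for regular (as opposed to $\pi$-regular) operators is exactly what is at stake, not something already available. The paper needs no such general principle: because the groups $G_\delta$ of the spectrum are the images of $G$ under the projections $\pi_{A_\delta}$ of the \emph{given} product $\Pi$, the regular operator $\mathrm e\colon\Tee_G\to\Tee_{\Pi_\lambda}$ supplied by Proposition~\ref{regemb} is simply pulled back along the diagonal product $\pi\colon\Pi\to\Pi_\lambda$ of the maps $\pi_{A_\delta}$ via $\theta(U)=\pi^{-1}(\mathrm e(U))$. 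With these two repairs your induction goes through exactly as in the paper.
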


\begin{proof}
$(1)\Rightarrow (2)$. Suppose $G$ is a near-openly generated group.
Then, by theorem~\ref{Main}, $G$  is $\mathrm{I}$-favorable. So, $G$
is $\mathbb R$-factorizable (proposition~\ref{IfavRfact}).
Consequently, according to~\cite[proposition 8.1.3]{ArTk}, $G$ is
topologically isomorphic to a subgroup of a product of
second-countable topological groups. Finally, by
proposition~\ref{Ifav}, any embedding of $G$ into another space if
$\pi$-regular.

$(2)\Rightarrow (3)$. Suppose $G$ satisfies item $(2)$. Then $G$ is
$\pi$-regularly embedded in a product of a second-countable spaces
and, by corollary~\ref{Icor}, $G$ is $\mathrm{I}$-favorable. We are
going to show by transfinite induction with respect to $w(G)$ that
every subgroup of a product of second-countable topological groups,
topologically isomorphic to $G$, is regularly embedded in that
product. Suppose $w(G)$ is countable and let $G$ be a subgroup of a
product $\Pi=\prod_{\alpha\in\Gamma}G_\alpha$ of second-countable
groups. We can find a countable set $A\subset\Gamma$ and a countable
base $\mathcal B$ of $G$ such that $\pi_A|_G:G\to\pi_A(G)=G_A$ is an
isomorphism and $\pi_A^{-1}(\pi_A(V))\cap G=V$ for all $V\in\mathcal
B$. Since $\Pi_A=\prod_{\alpha\in A}G_\alpha$ is second-countable,
there is a regular operator $\mathrm{e}_A:\mathcal
T_{G_A}\to\mathcal T_{\Pi_A}$. Then $$\mathrm{e}:\mathcal
T_G\to\mathcal T_\Pi,\
\mathrm{e}(U)=\bigcup\{\pi_A^{-1}(\mathrm{e}_A(\pi_A(V))):V\in\mathcal
B\ \mbox{and}\ V\subset U\},$$ is a regular operator.

Suppose $w(G)=\tau$ is uncountable and our statement holds for every
$\mathrm{I}$-favorable group of weight $<\tau$. Let $G$ be  a
subgroup of a product $\Pi=\prod_{\alpha\in\Gamma}G_\alpha$ of
second-countable groups. For every sets $C\subset B\subset\Gamma$
let $\pi_B:\Pi\to\Pi_B=\prod_{\alpha\in B}G_\alpha$ and
$\pi^B_C:\Pi_B\to\Pi_C$ denote the corresponding projections. Take a
local base $\{U_\gamma:\gamma\in\Lambda\}$ at the neutral element in
$G$ of cardinality $|\Lambda|=\tau$ with each $U_\gamma$ being of
the form $U_\gamma=\pi_{B_\gamma}^{-1}(V_\gamma)\cap G$, where
$B_\gamma\subset\Gamma$ is finite and $V_\gamma$ is a neighborhood
of the neutral element in $\pi_{B_\gamma}(G)$. Then the set
$A=\bigcup_{\gamma\in\Lambda}B_\gamma$ is a subset of $\Gamma$ of
cardinality $\tau$ and $G$ is topologically isomorphic to
$\pi_A(G)$. We identify $G$ with the image $\pi_A(G)$. By
proposition~\ref{trsp}, $A$ can be covered by an increasing family
$A_\delta$, $\delta<\lambda=\mathrm{cf}(\tau)$, such that:
\begin{itemize}
\item $|A_\delta|<\tau$ for each $\delta$;
\item the spectrum $\displaystyle S_G=\{G_\delta, q^{\delta}_\eta, \eta\leq\delta<\lambda\}$ is almost continuous and consists of $\mathrm{I}$-favorable groups
and nearly open homomorphisms, where $G_\delta=\pi_{A_\delta}(G)$
and $q^{\delta}_\eta=\pi^{A_\delta}_{A_\eta}|_{G_\delta}$;
\item $G=\mathrm{a}-\displaystyle\lim_\leftarrow S_G$.
\end{itemize}
According to our assumption, for each $\delta$ there exists a
regular operator
$\mathrm{e}_\delta:\Tee_{G_\delta}\to\Tee_{\Pi_{A_\delta}}$. Then
proposition~\ref{regemb} implies the existence of a regular operator
$\mathrm{e}:\Tee_{G}\to\Tee_{\Pi_\lambda}$, where
$\Pi_\lambda=\prod_{\delta<\lambda}\Pi_{A_\delta}$. Denote by
$\pi:\Pi\to \Pi_\lambda$ the diagonal product of all homomorphisms
$\pi_{A_\delta}$ and for every open $U\subset G$ let
$\theta(U)=\pi^{-1}(\mathrm{e}(U))$. It is easily seen that
$\theta:\Tee_Q\to\Tee_\Pi$ is regular.

$(3)\Rightarrow (1)$. This implication follows from
corollary~\ref{Icor} and theorem~\ref{Main}.
\end{proof}

\begin{cor} Let $\displaystyle S_G=\{G_\gamma, p^{\delta}_\gamma,
\gamma\leq\delta<\lambda\}$ be a transfinite almost continuous
spectrum of near-openly generated groups $G_\gamma$ and nearly open
homomorphisms $p^{\delta}_\gamma$ such that
$G=\mathrm{a}-\displaystyle\lim_\leftarrow S_G$. Then $G$ is
near-openly generated group.
\end{cor}

\begin{proof}
By theorem~\ref{Main1}, each $G_\gamma$ is regularly embedded in a
product $H_\gamma$ of second-countable groups. So, according to
\cite[proposition 4.3]{v}, $G$ is regularly embedded in the product
$\prod_{\gamma<\delta}H_\gamma$. Finally, theorem~\ref{Main1}
completes the proof.
\end{proof}

Next theorem provides local characterization of near-openly
generated groups.

\begin{thm}\label{local}
An $\omega$-narrow group $G$ is near-openly generated iff $G$ has an
$\mathrm{I}$-favorable neighborhood of the identity.
\end{thm}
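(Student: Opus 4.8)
The plan is to reduce the statement to the open--open game, using Theorem~\ref{Main} to pass freely between near-open generation and $\mathrm{I}$-favorability. The necessity is immediate and does not use $\omega$-narrowness: if $G$ is near-openly generated, then by Theorem~\ref{Main} it is $\mathrm{I}$-favorable, and $G$ is an open neighborhood of its own identity, so it has an $\mathrm{I}$-favorable neighborhood of the identity.

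For the sufficiency I would argue as follows. Let $U$ be an $\mathrm{I}$-favorable neighborhood of the neutral element $e$ and pick an open neighborhood $V$ of $e$ with $V\subseteq U$; then $V$ is $\mathrm{I}$-favorable by Proposition~\ref{proIfav}, being open in $U$. Since $G$ is $\omega$-narrow, there is a countable set $\{g_n:n\in\omega\}\subseteq G$ with $G=\bigcup_{n\in\omega}g_nV$. Each left translation is a homeomorphism of $G$, so each $g_nV$ is an open $\mathrm{I}$-favorable subspace of $G$, and these cover $G$. By Theorem~\ref{Main} it therefore suffices to establish the general fact: \emph{the union of countably many open $\mathrm{I}$-favorable subspaces of a space is again $\mathrm{I}$-favorable}.

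To prove this fact, write $X=\bigcup_{n\in\omega}X_n$ with each $X_n$ open and $\mathrm{I}$-favorable, and fix a winning strategy $\mu_n$ for player~I in the open--open game on $X_n$. Player~I plays the game on $X$ by running the games on the $X_n$'s in parallel: fixing a sequence $(n_0,n_1,\dots)$ in which every natural number occurs infinitely often, at round $r$ player~I makes the move dictated by $\mu_{n_r}$ for the next round of the ``subgame on $X_{n_r}$'', where the history of that subgame is the list of player~II's responses at the earlier rounds $r'$ with $n_{r'}=n_r$. Since every value of $\mu_n$ is a nonempty open subset of $X_n\subseteq X$, these are legal moves for player~I on $X$, and every response of player~II then falls inside the corresponding $X_{n_r}$, hence is a legal move in that subgame. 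Because $\mu_n$ is winning, after $\omega$ rounds the union of player~II's responses inside the subgame on $X_n$ is dense in $X_n$; as the $X_n$ are open and cover $X$, the union of all of player~II's moves is dense in $X$. Thus player~I wins, $X$ is $\mathrm{I}$-favorable, and applying this to $G=\bigcup_n g_nV$ and then Theorem~\ref{Main} completes the proof.

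The only delicate point is the bookkeeping in the parallel play: one must choose the schedule $(n_r)$ so that each subgame is visited infinitely often, and check that the recorded history of each subgame is, at every stage, a legitimate partial play for $\mu_n$ (so that $\mu_n$ may be applied). This is routine; the real content of the theorem is that $\omega$-narrowness collapses the a priori large cover of $G$ by translates of $V$ to a countable one, which is exactly what makes it possible to interleave the countably many winning strategies into a single winning strategy on $G$.
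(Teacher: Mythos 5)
Your argument is correct, but it takes a genuinely different route from the paper's. Both proofs start identically: $\omega$-narrowness yields a countable cover of $G$ by translates $g_nV$ of an open $\mathrm{I}$-favorable neighborhood $V$ of the identity. From there the paper does not argue game-theoretically at all. It first uses those translates together with Proposition~\ref{pi-z} to show that $G$ is $z$-embedded in a product of second-countable groups, hence $\mathbb R$-factorizable; it then takes the factorizing spectrum $S_G$ of all continuous homomorphisms of $G$ onto second-countable groups, restricts it to $V$ (Proposition~\ref{embfact}), applies Proposition~\ref{lemmain} to the $\sigma$-lattice of skeletal maps on $V$ to extract a cofinal subspectrum of $S_G|V$ with skeletal bonding maps, and lifts back to $G$ via the observation that a homomorphism which is skeletal on a neighborhood of the identity is skeletal, hence nearly open. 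You instead isolate and prove the purely topological lemma that a countable union of open $\mathrm{I}$-favorable subspaces is $\mathrm{I}$-favorable, by interleaving the countably many winning strategies in the open--open game, and then invoke Theorem~\ref{Main} to pass from $\mathrm{I}$-favorability of $G$ to near-open generation. Your interleaving is sound: each value of $\mu_{n_r}$ is open in $X$ because $X_{n_r}$ is open, each response of player II is a legal move in the corresponding subgame, and a set that is dense in every member of an open cover is dense in the union. The lemma you prove is not stated in the paper and is of independent interest. The trade-off is that your route delegates all of the spectral construction to the implication $(2)\Rightarrow(1)$ of Theorem~\ref{Main} (which is proved earlier and independently of Theorem~\ref{local}, so there is no circularity), whereas the paper's proof exhibits the witnessing spectrum of nearly open homomorphisms directly and, as a by-product, records that $G$ is $\mathbb R$-factorizable.
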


\begin{proof}
If $G$ is near-openly generated, then it is $\mathrm{I}$-favorable
and, by proposition~\ref{proIfav}, every open subset of $G$ is
$\mathrm{I}$-favorable.

Suppose $G$ is $\omega$-narrow and let $V$ be an
$\mathrm{I}$-favorable neighborhood of the identity.

First, let show that $G$ is $\mathbb R$-factorizable. As an
$\omega$-narrow group, $G$ is isomorphically embedded into a product
$\Pi=\prod_{\alpha\in A}G_\alpha$ of second-countable groups. Taking
an open subset of $V$ if necessarily, we can assume that $V=G\cap O$
where $O$ is a co-zero set in $\Pi$. By theorem~\ref{Ifav}, $V$ is
$\pi$-regularly embedded in $O$ and, according to
proposition~\ref{pi-z}, $V$ is $z$-embedded in $O$. Therefore, for
every co-zero set $U$ in $V$ there is a co-zero set $W\subset O$
with $U=V\cap W$. Thus, $U=G\cap W$. Since $G$ is $\omega$-narrow,
there is a sequence $\{g_i\}\subset G$ such that $\{g_iV:
i\in\mathbb N\}$ covers $G$. Each set $g_iV$ is
$\mathrm{I}$-favorable being homeomorphic to $V$. Moreover,
$g_iO\cap G=g_iV$ and each $g_iV$ is $z$-embedded in $g_iO$. As
above, we can show that for every co-zero set $U_i\subset g_iV$
there is a co-zero set $W_i\subset g_iO$ with  $U_i=G\cap W_i$,
$i\geq 1$. This yields that $G$ is $z$-embedded in $\Pi$. Indeed, if
$U$ is a co-zero set in $G$, then each $U_i=U\cap g_iV$ is a co-zero
set in $g_iV$. Hence, for every $i$ there is a co-zero set
$W_i\subset g_iO$ such that $U_i=G\cap W_i$. Finally, the set
$W=\bigcup_{i=1}^\infty W_i$ is a co-zero set in $\Pi$ because so is
every $W_i$, and $W\cap G=U$. Thus, $G$ is $z$-embedded in $\Pi$.
Hence, $G$ is an $\mathbb R$-factorizable group (see \cite[theorem
8.2.6]{ArTk}).

We can complete the proof. Since $V$ is $\mathrm{I}$-favorable,
there exists a $\sigma$-lattice $L$ of skeletal maps on $V$, see
proposition~\ref{lattice}. On the other hand, since $G$ is $\mathbb
R$-factorizable, all continuous homomorphisms on $G$ having
second-countable images form a factorizing almost continuous
$\sigma$-spectrum $S_G$ with
$G=\mathrm{a}-\displaystyle\lim_\leftarrow S_G$. Then, by
proposition~\ref{embfact}, the restriction $S_G|_V$ is a factorizing
almost continuous $\sigma$-spectrum and
$V=\mathrm{a}-\displaystyle\lim_\leftarrow S_G|_V$. So, according to
proposition~\ref{lemmain}, there exists a cofinal almost continuous
$\sigma$-subspectrum $S$ of $S_G|_V$ consisting of skeletal maps.
Observe that $S$ is the restriction of an almost continuous
$\sigma$-spectrum $\widetilde S=\{G_\alpha, p^\beta_\alpha, A\}$, a
subspectrum of $S_G$. Since $S$ is cofinal in $S_G|_V$, $\widetilde
S$ is also cofinal in $S_G$. Hence, $\widetilde S$ is factorizing
(because so is $S_G$) and
$G=\mathrm{a}-\displaystyle\lim_\leftarrow\widetilde S$. Moreover,
because every locally skeletal homomorphism is skeletal, all
projections $p_\alpha:G\to G_\alpha$ are nearly open homomorphisms,
see~\cite[lemma 4.3.29]{ArTk}.  Therefore, $G$ is a near-openly
generated group.
\end{proof}

\subsection{Examples and questions}

\begin{ex}
1) Any compact group is near-openly generated and has a
$\sigma$-lattice of open homomorphisms onto metrizable compact
groups.

2) The function space $C_p(X)$ equipped with the pointwise
convergence topology is near-openly generated as a dense subgroup of
the product $\mathbb R^X$.

3) Any topological group $G$ homeomorphic to $\mathrm{I}$-favorable
space with a $\sigma$-lattice of open maps is near-openly generated
and has a $\sigma$-lattice of open homomorphisms.

\begin{proof}
Indeed, let $\Psi_{op}$ be a $\sigma$-lattice on $G$ of open maps.
Then, by theorem~\ref{Main}, $G$ is near-openly generated and has a
$\sigma$-lattice $\Psi_{no}$ of nearly open homomorphisms. According
to proposition~\ref{propspectr}, the family
$\Psi=\Psi_{no}\cap\Psi_{op}$ is also a $\sigma$-lattice on $G$ and
consists of open homomorphisms.
\end{proof}

4) Let a group $G$ be homeomorphic to a product $\Pi=\Pi\{G_t: t\in
T\}$ of near-openly generated groups such that  every $G_t$ has a
$\sigma$-lattice of open homomorphisms. Then $G$ is near-openly
generated and has a $\sigma$-lattice of open homomorphisms.

\begin{proof}
Let $\Psi_t$ be a $\sigma$-lattice for $G_t$, $t\in T$, consisting
of open homomorphisms. We can assume that each $\Psi_t$ does not
contain a constant homomorphism. For every countable set $A\subset
T$ we consider the family $\mathcal M_A$ of all maps on the
subproduct $\Pi_A=\prod_{t\in A}G_t$ having the form $\prod_{t\in
A}f_t$ with $f_t\in\Psi_t$ for each $t\in A$. Denote by $\mathcal
L_A$ the family $\{f\circ\pi_A:f\in\mathcal M_A\}$, where $\pi_A$ is
the projection onto $\Pi_A$, and let $\Psi_G$ consist of all
families $\mathcal L_A$, $A$ being a countable subset of $T$. One
can show that if $f\circ\pi_A\prec g\circ\pi_B$ for some
$f=\prod_{t\in A}f_t\in\mathcal L_A$ and  $g=\prod_{t\in
B}g_t\in\mathcal L_B$, then $B\subset A$ and $f_t\prec g_t$ for all
$t\in B$. This easily implies that $\Psi_G$ satisfies condition
$(L1)$ from the definition of a $\sigma$-lattice. It is also true
that for any two maps $f\circ\pi_A\in\mathcal L_A$ and
$g\circ\pi_B\in\mathcal L_B$ there is $h\circ\pi_{A\cup
B}\in\mathcal L_{A\cup B}$ such that $h\circ\pi_{A\cup B}\prec
f\circ\pi_A$ and $h\circ\pi_{A\cup B}\prec g\circ\pi_B$. So,
$\Psi_G$ is directed with respect to the relation $\prec$. Moreover,
$\Psi_G$ consists of open homomorphisms and generates the topology
of $\Pi$. Therefore, $\Psi_G$ generates an almost continuous
$\sigma$-spectrum $S_G$ with
$\Pi=\mathrm{a}-\displaystyle\lim_\leftarrow S_G$, see
proposition~\ref{sp-lat}. Since $\Pi$ is $\mathrm{I}$-favorable (as
a product of $\mathrm{I}$-favorable spaces), by
proposition~\ref{IfavRfact}, $\Pi$ is $\mathbb R$-factorizable.
Finally, theorem~\ref{factsig} yields that $S_G$ is a factorizable
spectrum. Consequently, $\Psi_G$ is a $\sigma$-lattice of open
homomorphisms, and we apply example 1.3 to conclude that $G$ is
near-openly generated and has a $\sigma$-lattice of open
homomorphisms.
\end{proof}

5)\ \textit{Almost connected pro-Lie groups.} Pro-Lie groups were
introduced in~\cite{hm} as the groups which are projective limits of
Lie groups. Any pro-Lie group $G$ such that the quotient group
$G/G_0$ is compact, where $G_0$ is the connected component of $G$,
is  called {\it almost connected pro-Lie group}, see~\cite{hm1}. By
\cite[corollary 8.9]{hm1} any almost connected pro-Lie group is
homeomorphic to a product of a compact topological group and a power
of $\mathbb R$. Thus, $G$ is near-openly generated and has a
$\sigma$-lattice of open homomorphisms, see example 1.4.

6) Let $K$ be a compact invariant subgroup of $G$. If the quotient
group $H=G/K$ is an $\mathrm{I}$-favorable space with a
$\sigma$-lattice of open maps, then $G$ is a near-openly generated
group with a $\sigma$-lattice of open homomorphisms.

\begin{proof}
We use the construction from~\cite[theorem 3.11]{TkL}. By example
1.3, $H$ has a $\sigma$-lattice $\Psi_H$ of open homomorphisms. Let
${\mathcal A}_H=\{\mathrm{Ker}(\varphi):\varphi\in\Psi_H\}$.
Consider also the family $\mathcal A_G$ of all invariant subgroups
$N$ of $G$ such that the quotient group $G/N$ is a second-countable
space and $\pi (N)\in{\mathcal A}_H$, where $\pi:G\to H$ is the
quotient map. It follows from~\cite[lemma 3.3]{TkL} and the proof
of~\cite[theorem 3.11]{TkL} that the family $\Psi_G$ of all quotient
maps correspondent to ${\mathcal A}_G$ is a $\sigma$-lattice for $G$
consisting of open homomorphisms. Let us note that the requirement
in~\cite[theorem 3.11]{TkL} to have a strong $\sigma$-lattice on $H$
is not necessary in our case.
\end{proof}

\begin{cor}
Let $K$ be a compact invariant subgroup of $G$. If the quotient
group $H=G/K$ is homeomorphic to an almost connected pro-Lie group,
then $G$ is a near-openly generated group with a $\sigma$-lattice of
open homomorphisms.
\end{cor}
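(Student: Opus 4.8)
The plan is to obtain the corollary by chaining together two of the examples above. First I would use Example 3.9.5: since $H=G/K$ is homeomorphic to an almost connected pro-Lie group, it is homeomorphic (via \cite[Corollary~8.9]{hm1}) to a product of a compact topological group and a power of $\mathbb R$, and hence $H$ is a near-openly generated group carrying a $\sigma$-lattice of open homomorphisms. In particular, by Theorem~\ref{Main}, $H$ is an $\mathrm I$-favorable space, and its $\sigma$-lattice of open homomorphisms is \emph{a fortiori} a $\sigma$-lattice of open maps.

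Next I would apply Example 3.9.6 to the compact invariant subgroup $K\subset G$. Its hypothesis is precisely that $H=G/K$ is an $\mathrm I$-favorable space possessing a $\sigma$-lattice of open maps, which has just been verified; its conclusion is exactly that $G$ is a near-openly generated group with a $\sigma$-lattice of open homomorphisms, which is the assertion of the corollary.

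There is no substantial obstacle here: the statement is a formal consequence of Examples 3.9.5 and 3.9.6. The only points deserving a word of care are bookkeeping ones — that the properties transferred to $H$ (being $\mathrm I$-favorable, carrying a $\sigma$-lattice of open maps) are invariant under homeomorphism, which is consistent with the convention fixed at the start of this section that an $\mathrm I$-favorable group means a topological group homeomorphic to an $\mathrm I$-favorable space, and that passing from open homomorphisms to open maps is harmless, so that the hypothesis of Example 3.9.6 holds verbatim. If one prefers to avoid quoting Example 3.9.5, the same conclusion for $H$ can be reached directly from Examples 3.9.1 and 3.9.4 applied to the decomposition $H\cong C\times\mathbb R^{\kappa}$, using that a compact group and each factor $\mathbb R$ are $\mathrm I$-favorable groups with a $\sigma$-lattice of open homomorphisms.
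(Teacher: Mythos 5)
Your proof is correct and matches the paper's intended argument: the corollary is stated immediately after Examples 3.9.5 and 3.9.6 precisely because it is their formal combination, which is exactly the chain you give. Your bookkeeping remark --- that the homeomorphism only transfers a $\sigma$-lattice of open \emph{maps} (not homomorphisms) to $H$, which is all that Example 3.9.6 requires --- is the right point of care.
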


7)\ \textit{Lindel\" off almost metrizable groups.} A group is
almost metrizable~\cite{pa} if it contains a compact subset of
countable character (almost metrizable groups coincides with {\em
feathered groups}~\cite{ArTk}). In case an almost metrizable group
$G$ is lindel\" off it has a compact invariant subgroup $K$ of
countable character such that the quotient group $G/K$ is
second-countable. So, by Example 1.6, $G$ is a near-openly generated
group with a $\sigma$-lattice of open homomorphisms.

8) Any near-openly generated group has countable cellularity. Since
there are Lindel\" off groups of uncountable cellularity and any
Lindel\" off group is $\mathbb R$-factorizable, there is an $\mathbb
R$-factorizable group which is not near-openly generated.
\end{ex}

\begin{qu}
Let $K$ be a compact invariant subgroup of a topological group $G$
such that the quotient group $H=G/K$ is near-openly generated. Is it
true that $G$ is near-openly generated?
\end{qu}

\begin{qu}
Let $G$ be an almost limit of an almost $\sigma$-continuous spectrum
$($see definition in next section, just before proposition
$\ref{lemmain}$$)$ $S_G$ of near-openly generated groups and nearly
open homomorphisms. Is $G$ a near-openly generated group?
\end{qu}

\section{Appendix}
\subsection{Some properties of spectra}
In this section statements used in the previous sections are
provided.

By a {\em cofinal subspectrum} of a spectrum $\displaystyle
S=\{X_\alpha, p^{\beta}_\alpha, A\}$ we mean a spectrum
$\displaystyle S'=\{X_\alpha, p^{\beta}_\alpha, A'\}$ such that the
index set $A'$ is cofinal in $A$, i.e., for every $\alpha\in A$
there is $\gamma\in A'$ with $\alpha<\gamma$. Clearly, every cofinal
subspectrum of a factorizing spectrum is also factorizing. Below, if
$\Psi$ is a family of maps on a space $X$ we say that {\em $\Psi$ is
directed} if it is directed with respect to the order on $\Psi$
defined by $f<g$ iff $g\prec f$. For any two maps $f,g\in\Psi$ with
$f<g$ there is a unique map $p^g_f:g(X)\to f(X)$ such that
$f=p^g_f\circ g$. The proof of the next proposition needs only
comparison of correspondent definitions.

\begin{pro}\label{sp-lat}
Let $\Psi$ be a directed family of maps on $X$ onto second-countable
spaces such that $\Psi$ generates the topology of $X$ and satisfies
condition $(L1)$. Then  $S_\Psi=\{f(X),p^g_f, \Psi\}$ is an almost
continuous $\sigma$-spectrum with $X=\rm
a-\displaystyle\lim_\leftarrow S_\Psi$. If, in addition, $\Psi$ is a
$\sigma$-lattice, $S_\Psi$ is a factorizing. Moreover, every
$\sigma$-sublattice of $\Psi$ generates a cofinal almost continuous
$\sigma$-subspectrum $S'$ of $S_\Psi$ with $X=\rm
a-\displaystyle\lim_\leftarrow S'$.
\end{pro}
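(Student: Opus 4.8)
The plan is to verify, in turn, the three defining properties of an almost continuous $\sigma$-spectrum for $S_\Psi=\{f(X),p^g_f,\Psi\}$, then the equality $X=\mathrm{a}-\lim_\leftarrow S_\Psi$, then the factorizing clause, and finally the assertion about $\sigma$-sublattices. Throughout, $\Psi$ is ordered by $f<g$ iff $g\prec f$, and $p^g_f\colon g(X)\to f(X)$ is the unique map with $f=p^g_f\circ g$ (unique since $g$ is onto $g(X)$; the cocycle identity $p^g_f\circ p^h_g=p^h_f$ follows by the same surjectivity argument, so $S_\Psi$ really is an inverse system). Each $p^g_f$ is surjective because $f$ is onto $f(X)$, which together with the hypothesis that the $f(X)$ are second-countable gives property~(1). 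For property~(2), $\sigma$-completeness of the index set, I would take an increasing sequence $\varphi_1<\varphi_2<\cdots$, i.e. $\varphi_{n+1}\prec\varphi_n$ for all $n$; condition $(L1)$ puts $\varphi=\triangle_{n\ge1}\varphi_n$ in $\Psi$. The coordinate projections $\varphi(X)\to\varphi_n(X)$ witness $\varphi_n<\varphi$, so $\varphi$ is an upper bound, and if $\psi\in\Psi$ satisfies $\varphi_n=h_n\circ\psi$ for all $n$ then $\varphi=(\triangle_n h_n)\circ\psi$, so $\varphi<\psi$. Hence $\varphi=\sup\{\varphi_n\}$, up to the usual identification of $f$ with $g$ when $f\prec g\prec f$.

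For property~(3), let $\varphi=\triangle_n\varphi_n$ be the supremum of such a sequence. Since $\varphi_n=p^{\varphi_{n+1}}_{\varphi_n}\circ\varphi_{n+1}$, the map $\varphi$ sends $X$ into $L:=\lim_\leftarrow\{\varphi_n(X),p^{\varphi_{n+1}}_{\varphi_n},n\ge1\}\subset\prod_n\varphi_n(X)$, and I claim $\varphi(X)$ is dense in $L$. Indeed, given $z=(z_n)\in L$ and a basic neighborhood of $z$ depending only on the coordinates $\le N$, choose $x\in X$ with $\varphi_N(x)=z_N$ (possible since $z_N\in\varphi_N(X)$); then $\varphi_n(x)=p^{\varphi_N}_{\varphi_n}(z_N)=z_n$ for all $n\le N$ by compatibility, so $\varphi(x)$ lies in that neighborhood. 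This is the one place where it matters that the subsystem is an inverse \emph{sequence}: a single coordinate pins down all smaller ones. For $X=\mathrm{a}-\lim_\leftarrow S_\Psi$, I would note that $\Psi$ separates points of the Hausdorff space $X$ (if $x\ne y$, since $\Psi$ generates the topology some $f^{-1}(W)$ contains $x$ but not $y$, forcing $f(x)\ne f(y)$), so the diagonal $\triangle_{f\in\Psi}f$ is injective; it is an embedding because $\Psi$ generates the topology; its image lies in $\lim_\leftarrow S_\Psi$; and each limit projection $p_f$ carries that image onto $f(X)$.

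For the factorizing clause, assume in addition $(L2)$. Given a continuous $g\colon X\to\mathbb R$, view it as a surjection onto the second-countable space $g(X)$; by $(L2)$ there are $\varphi\in\Psi$ and $h\colon\varphi(X)\to g(X)$ with $g=h\circ\varphi$, so (identifying $X$ with its image in $\lim_\leftarrow S_\Psi$) $g=h\circ p_\varphi$, as required. For the last statement, let $\Psi'\subset\Psi$ be a $\sigma$-sublattice. Applying $(L2)$ of $\Psi'$ to the map $x\mapsto(f(x),g(x))$ shows $\Psi'$ is directed, and applying $(L2)$ to Urysohn functions (here we use that $X$ is Tychonoff) shows $\Psi'$ generates the topology of $X$; since $\Psi'$ also satisfies $(L1)$, the first part of the proposition applies to $\Psi'$ and gives that $S_{\Psi'}=\{f(X),p^g_f,\Psi'\}$ is an almost continuous $\sigma$-spectrum with $X=\mathrm{a}-\lim_\leftarrow S_{\Psi'}$. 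Finally, $(L2)$ of $\Psi'$ applied to each $f\in\Psi$ (which has second-countable image) produces $\varphi\in\Psi'$ with $f\prec\varphi$, i.e. $f<\varphi$; thus $\Psi'$ is cofinal in $\Psi$ and $S_{\Psi'}$ is a cofinal subspectrum of $S_\Psi$.

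The only genuinely delicate step I anticipate is the density in property~(3); everything else is bookkeeping, and even that step is short precisely because the system in question is an inverse sequence. A minor technical caveat is that $f\prec g\prec f$ need not force $f=g$, so $\Psi$ is only preordered; this is harmless and is handled by the standard identification used for index sets of spectra.
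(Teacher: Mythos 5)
Your proof is correct and follows the only natural route: the paper itself offers no details for this proposition (it simply asserts that each claim "can be checked"), and your verification of surjectivity of the bonding maps, $\sigma$-completeness via $(L1)$, density of $\varphi(X)$ in the countable inverse limit, the diagonal embedding, and cofinality of a $\sigma$-sublattice via $(L2)$ is exactly the intended argument. The two caveats you flag (that $\prec$ only gives a preorder, and that density hinges on the subsystem being a sequence) are the right ones to notice.
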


Besides almost continuous $\sigma$-spectra, we also consider spectra
$\displaystyle S=\{X_\alpha, p^{\beta}_\alpha, A\}$ such that all
bonding maps $p^{\beta}_\alpha$ are surjective, the directed set $A$
is $\sigma$-complete and for any  countable chain
$\{\alpha_n\}_{n\geq 1}\subset A$ with
$\beta=\sup\{\alpha_n\}_{n\geq 1}$ the space $X_\beta$ is a (dense)
subset of
$\displaystyle\lim_\leftarrow\{X_{\alpha_n},p^{\alpha_{n+1}}_{\alpha_n},
n\geq 1\}$. Any such a spectrum is said to be {\em almost
$\sigma$-continuous}. In other words, almost $\sigma$-continuous
spectra satisfy the conditions determining almost continuous
$\sigma$-spectra but the spaces $X_\alpha$ are not required to be
second-countable.

\begin{pro}\label{lemmain} Let $f$ be a homeomorphism of $X$
onto $Y$, $S_X=\{X_\alpha, p^\alpha_\beta, A_X\}$ be a factorizing
almost continuous $\sigma$-spectrum with
$X=a-\displaystyle\lim_\leftarrow S_X$ and $\Psi_Y$ be a
$\sigma$-lattice on $Y$. Then $f$ is induced by an isomorphism of
cofinal subspectra $S_X'$ of $S_X$ and $S_Y'$ of $S_Y$, where $S'_X$
is an almost continuous $\sigma$-subspectra of $S_X$ and $S_Y$ is
the factorizing almost continuous $\sigma$-spectrum generated by
$\Psi_Y$.
\end{pro}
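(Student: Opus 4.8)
The plan is to reduce this to the classical spectral theorem for factorizing spectra, applied to the spectrum $S_Y$ generated by the $\sigma$-lattice $\Psi_Y$ and to $S_X$. First I would invoke Proposition~\ref{sp-lat} to form $S_Y=\{g(Y),q^h_g,\Psi_Y\}$, the factorizing almost continuous $\sigma$-spectrum with $Y=\mathrm a-\lim_\leftarrow S_Y$; since $\Psi_Y$ is a $\sigma$-lattice, $S_Y$ is factorizing and $\sigma$-complete. The homeomorphism $f\colon X\to Y$ transports the limit projections of $S_Y$ to a factorizing family of maps on $X$: for each $g\in\Psi_Y$ the composite $g\circ f\colon X\to g(Y)$ has second-countable range. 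The goal is to match these, cofinally, with the projections of $S_X$.

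Next I would build, by a back-and-forth transfinite induction of length $w(X)$, increasing chains of indices. Starting from any $\alpha_0\in A_X$ and any $g_0\in\Psi_Y$: because $S_X$ is factorizing, the map $g_0\circ f$ factors through some $p_{\alpha_1}$ with $\alpha_1\geq\alpha_0$, i.e. there is $u_1\colon X_{\alpha_1}\to g_0(Y)$ with $g_0\circ f=u_1\circ p_{\alpha_1}$; conversely, because $\Psi_Y$ satisfies $(L2)$, the map $p_{\alpha_1}\circ f^{-1}\colon Y\to X_{\alpha_1}$ is dominated by some $g_1\in\Psi_Y$ with $g_1\succ g_0$ (here one uses that $p_{\alpha_1}(X)=X_{\alpha_1}$ is second countable). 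Iterating and, at limit stages, using $\sigma$-completeness of $A_X$ together with condition $(L1)$ for $\Psi_Y$ (which guarantees the diagonal of a countable increasing chain lies in $\Psi_Y$), one obtains cofinal index sets $A_X'\subset A_X$ and $\Psi_Y'\subset\Psi_Y$ and, for each matched pair $(\alpha,g)$, mutually inverse maps $X_\alpha\to g(Y)$ and $g(Y)\to X_\alpha$ commuting with the bonding maps. This yields an isomorphism of the cofinal subspectra $S_X'=\{X_\alpha,p^\beta_\alpha,A_X'\}$ and $S_Y'=\{g(Y),q^h_g,\Psi_Y'\}$, and the induced limit homeomorphism is exactly $f$ since the two families of projections generate the same topology on $X$ (transported via $f$).

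The main obstacle is the bookkeeping at limit stages: one must check that the index set $A_X'$ one constructs is itself $\sigma$-complete and that suprema taken in $A_X$ of increasing sequences from $A_X'$ again produce points that are correctly matched on the $\Psi_Y$ side. On the $X$-side this is automatic from almost continuity of $S_X$ (the space $X_\beta$ sits densely in the limit of the countable subspectrum below it); on the $Y$-side it is exactly what $(L1)$ for the $\sigma$-lattice delivers — the diagonal product $\triangle g_n$ of a matched increasing sequence is again a member of $\Psi_Y$, and its range is the limit of the corresponding subspectrum, so the identification propagates through the limit. Once the limit stages are handled, the successor steps are routine applications of the factorization property of $S_X$ in one direction and property $(L2)$ of $\Psi_Y$ in the other, so no further real difficulty arises. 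Finally one notes that $S_X'$ is an almost continuous $\sigma$-subspectrum of $S_X$ (a cofinal $\sigma$-complete subset of a $\sigma$-complete set, with the almost-continuity inherited) and that $f=\lim_\leftarrow(\text{isomorphism})$, which completes the argument.
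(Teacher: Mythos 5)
Your proposal is correct and follows essentially the same route as the paper: reduce to matching the limit projections of $S_X$ against the members of $\Psi_Y$ by a countable zig-zag (factorize an element of $\Psi_Y$ through some $p_\alpha$ using that $S_X$ is factorizing, then refine $p_\alpha$ by an element of $\Psi_Y$ using $(L2)$), and close off under countable suprema using $\sigma$-completeness of $A_X$, almost continuity of $S_X$, and $(L1)$, so that the matched pairs form a cofinal $\sigma$-complete family on which the two spectra are isomorphic. The paper phrases this as showing that $\Psi_Y\cap\{p_\alpha:\alpha\in A_X\}$ is cofinal in both families after reducing to $X=Y$ and $f=\mathrm{id}$, which is exactly your construction in different notation.
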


\begin{proof}
Without loss of generality, we can assume that $X=Y$ and $f$ is the
identity. So, $\Psi=\Psi_Y$ is a $\sigma$-lattice on $X$. Denote by
$S_\Psi$ the factorizing almost continuous $\sigma$-spectrum
generated by $\Psi$, see proposition~\ref{sp-lat}. Hence, our proof
is reduced to show that there is a cofinal almost continuous
$\sigma$-subspectrum of $S_X$ which is a cofinal  subspectrum of
$S_\Psi$. To this end, let $\mathcal P=\{p_\alpha:\alpha\in A_X\}$.
Define a partial order on $\mathcal P$ by $p_\alpha<_{\mathcal
P}p_\beta$ iff $\alpha<\beta$. Obviously, $p_\alpha<_{\mathcal
P}p_\beta$ implies $p_\beta\prec p_\alpha$. The relation
$\varphi_1<_\Psi\varphi_2$ iff $\varphi_2\prec\varphi_1$ defines
also a partial order on the set $\Psi$.

\smallskip

\noindent{\it Claim 2.} $\Psi\cap\mathcal P$ satisfies condition
(L2) and it is cofinal in both $\Psi$ and $\mathcal P$.

\smallskip

Indeed, let $h$ be a continuous function on $X$. Using that $\Psi$
satisfies condition $(L2)$ and $S_X$ is factorizing, we can
construct by induction two sequences
$\{p_{\alpha_n}\}\subset\mathcal P$ and $\{\varphi_n\}\subset\Psi$
such that $\varphi_{n+1}\prec p_{\alpha_n}\prec\varphi_n\prec h$ and
$\alpha_n<\alpha_{n+1}$ for all $n$.  Then, because $\Psi$ is a
$\sigma$-lattice and $S_X$ is almost $\sigma$-continuous, we have
$\varphi=\triangle_{n\geq 1}\varphi_n\in\Psi$ and
$p_\alpha\in\mathcal P$, where $\alpha=\sup\alpha_n\in A_X$.
Moreover, our construction yields that $\varphi(X)$ is homeomorphic
to $X_\alpha$. So, $\varphi=p_\alpha$ belongs to $\Psi\cap\mathcal
P$ and $\varphi\prec h$. Similarly, taking $h\in\Psi$ (resp.,
$h\in\mathcal P$), we can find $\varphi\in\Psi\cap\mathcal P$ such
that $h<_\Psi\varphi$ (resp., $h<_\mathcal P\varphi$). This
completes the proof of the claim.

\medskip

Below we identify every element $\phi\in\Psi\cap\mathcal P$ with a
couple $(\varphi,p_\alpha)$ such that $\varphi\in\Psi$, $\alpha\in
A_X$ and the spaces $\varphi(X)$ and $X_\alpha=p_\alpha(X)$ are
homeomorphic. We introduce a partial order on $\Psi\cap\mathcal P$:
$(\varphi_1,p_{\alpha_1})<*(\varphi_2,p_{\alpha_2})$ iff
$\alpha_1<\alpha_2$. Obviously, if $\phi_1<*\phi_2$ for some
$\phi_1, \phi_2\in\Psi\cap\mathcal P$, then
$\varphi_2\prec\varphi_1$. Using again that $\Psi$ is a
$\sigma$-lattice and the spectrum $S_X$ is almost
$\sigma$-continuous, one can show that the set $\Psi\cap\mathcal P$
is $\sigma$-complete with respect to the order $<*$, i.e. if
$\{(\varphi_n,p_{\alpha_n})\}$ is a sequence in $\Psi\cap\mathcal P$
such that
$(\varphi_n,p_{\alpha_n})<*(\varphi_{n+1},p_{\alpha_{n+1}})$ for
each $n$, then $\{(\varphi_n,p_{\alpha_n})\}$ has a supremum in
$\Psi\cap\mathcal P$ and
$\sup\{(\varphi_n,p_{\alpha_n})\}=(\varphi,p_\alpha)$, where
$\varphi=\triangle_{n\geq 1}\varphi_n$ and $\alpha=\sup\alpha_n\in
A_X$. This implies that the subspectrum $S_{\Psi\cap\mathcal P}$ of
$S_X$ is almost $\sigma$-continuous. Finally, by claim 2,
$S_{\Psi\cap\mathcal P}$ is also factorizing and it is a cofinal
subspectrum of $S_X$ and $S_\Psi$.
\end{proof}

Next proposition is an analogue of E.~Shchepin's theorem~\cite{sc1}
on intersection of lattices. It follows from
proposition~\ref{lemmain}.

\begin{pro}\label{propspectr}
If $\Psi_1$ and $\Psi_2$ are two $\sigma$-lattices on $X$, then
$\Psi_1\cap\Psi_2$ is a $\sigma$-lattice on $X$.
\end{pro}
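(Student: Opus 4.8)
The plan is to verify that $\Psi:=\Psi_1\cap\Psi_2$ satisfies the two defining conditions (L1) and (L2) of a $\sigma$-lattice. Condition (L1) I expect to be immediate. If $\{\varphi_n\}\subset\Psi$ with $\varphi_{n+1}\prec\varphi_n$ for all $n$, then $\{\varphi_n\}$ is a $\prec$-decreasing sequence in $\Psi_1$, so $\triangle_{n\geq 1}\varphi_n\in\Psi_1$ by (L1) for $\Psi_1$; likewise $\triangle_{n\geq 1}\varphi_n\in\Psi_2$; hence $\triangle_{n\geq 1}\varphi_n\in\Psi$. Here one only uses that $\triangle_{n\geq 1}\varphi_n$ is canonically determined by the $\varphi_n$ up to a homeomorphism of its image, so that membership in each $\Psi_i$ is unambiguous.

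The substance is (L2), and here I would route through Proposition~\ref{lemmain}. First I would note that a $\sigma$-lattice is automatically a directed family of maps onto second-countable spaces generating the topology of $X$: directedness because, given $\varphi,\psi\in\Psi_1$, the diagonal product $\varphi\triangle\psi$ has a second-countable image, so (L2) produces $\chi\in\Psi_1$ with $\chi\prec\varphi\triangle\psi\prec\varphi,\psi$; and generation of the topology because, $X$ being Tychonoff, (L2) applied to Urysohn functions exhibits every open set as a union of preimages under members of $\Psi_1$ of open subsets of their images. Hence Proposition~\ref{sp-lat} applies to $\Psi_1$ and yields a factorizing almost continuous $\sigma$-spectrum $S_{\Psi_1}$ with $X=\mathrm{a}-\displaystyle\lim_\leftarrow S_{\Psi_1}$, whose set of limit projections is, under the standard identification of a map with the limit projection it induces, exactly $\Psi_1$.

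Next I would apply Proposition~\ref{lemmain} to the identity homeomorphism of $X$ onto $X$, with $S_X=S_{\Psi_1}$ and $\Psi_Y=\Psi_2$. Reading off its proof (Claim 2 there), with $\mathcal P=\{p_\varphi:\varphi\in\Psi_1\}$ the set of limit projections of $S_{\Psi_1}$, one gets that $\Psi_2\cap\mathcal P$ is a factorizing family on $X$ which is cofinal both in $\Psi_2$ and in $\mathcal P$; and $\Psi_2\cap\mathcal P$ is exactly $\Psi_1\cap\Psi_2=\Psi$ under the same identification of a map with the limit projection it induces (maps that factor homeomorphically through one another being identified, since $\mathcal P$ is identified with $\Psi_1$). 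In particular $\Psi$ is cofinal in $\Psi_1$: for every $\varphi\in\Psi_1$ there is $\psi\in\Psi$ with $\psi\prec\varphi$. Then (L2) for $\Psi$ drops out: given $f:X\to f(X)$ with $f(X)$ of countable weight, (L2) for $\Psi_1$ gives $\varphi_0\in\Psi_1$ with $\varphi_0\prec f$, and cofinality of $\Psi$ in $\Psi_1$ gives $\psi\in\Psi$ with $\psi\prec\varphi_0$, whence $\psi\prec f$ by transitivity of $\prec$. This completes the verification that $\Psi=\Psi_1\cap\Psi_2$ is a $\sigma$-lattice on $X$.

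The main obstacle I anticipate is the bookkeeping in the last two paragraphs rather than any deep point: confirming that the hypotheses of Proposition~\ref{sp-lat} and Proposition~\ref{lemmain} are literally met for $\Psi_1$, and — more delicately — pinning down the convention under which the family $\Psi_2\cap\mathcal P$ constructed inside the proof of Proposition~\ref{lemmain} coincides with $\Psi_1\cap\Psi_2$ (namely, identifying a map with its induced limit projection and identifying maps that factor homeomorphically through one another). Everything else is a one-line consequence of (L1)/(L2) for the individual $\Psi_i$ together with the cofinality supplied by Proposition~\ref{lemmain}.
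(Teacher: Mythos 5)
Your proposal is correct and follows essentially the same route as the paper, which derives Proposition~\ref{propspectr} directly from Proposition~\ref{lemmain} (applied, as you do, via the spectrum $S_{\Psi_1}$ from Proposition~\ref{sp-lat} and the cofinality of $\Psi_1\cap\Psi_2$ established in Claim 2 of that proof). Your verification of (L1) and the reduction of (L2) to cofinality plus transitivity of $\prec$ are exactly the details the paper leaves implicit.
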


\begin{pro}\label{FS} If $X=\mathrm{a}-\displaystyle\lim_\leftarrow S$, where $S$ is a factorizing almost continuous $\sigma$-spectrum,
then $\displaystyle\lim_\leftarrow S$ is the realcompactification of
$X$.
\end{pro}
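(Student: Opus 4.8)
The plan is to show that $Z=\displaystyle\lim_\leftarrow S$ is realcompact and that $X$ is $C$-embedded in $Z$; together these give that $Z$ is the realcompactification $\upsilon X$. First I would recall that for a factorizing spectrum $S=\{X_\alpha,p^\beta_\alpha,A\}$ with $X=\mathrm a-\displaystyle\lim_\leftarrow S$, every continuous real-valued function $f$ on $X$ factors as $f=f_\alpha\circ p_\alpha$ for some $\alpha\in A$. Since $p_\alpha\colon X\to X_\alpha$ extends to the limit projection $\widehat p_\alpha\colon Z\to X_\alpha$ (because $X$ is dense in $Z$ and $p_\alpha(X)=X_\alpha$, so $\widehat p_\alpha$ is surjective with $\widehat p_\alpha|_X=p_\alpha$), the function $\widehat f=f_\alpha\circ\widehat p_\alpha$ is a continuous extension of $f$ to $Z$. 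This establishes that $X$ is $C$-embedded in $Z$.

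Next I would verify that $Z$ is realcompact. Each $X_\alpha$ is second-countable, hence separable metrizable, hence realcompact. The limit $Z=\displaystyle\lim_\leftarrow S$ is a closed subspace of the product $\prod_{\alpha\in A}X_\alpha$ (being the set of threads), and a product of realcompact spaces is realcompact, as is any closed subspace of a realcompact space; therefore $Z$ is realcompact.

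Finally I would assemble the two facts. Since $X$ is dense in $Z$ (this is part of the definition of $S$ being almost continuous together with $X=\mathrm a-\displaystyle\lim_\leftarrow S$, as the almost-limit is dense in the limit), $C$-embedded in $Z$, and $Z$ is realcompact, the universal property of the realcompactification yields $Z=\upsilon X$: indeed, $\beta Z=\beta X$ via the density and $C$-embedding, and $\upsilon X$ is the smallest realcompact subspace of $\beta X$ containing $X$, while $Z$ is a realcompact subspace of $\beta X$ containing $X$, so $\upsilon X\subseteq Z$; conversely every point of $Z$ lies in $\upsilon X$ because the extensions $\widehat f$ witness that no point of $Z$ is "missed" by the functions of $X$ — more precisely, $Z\subseteq\upsilon X$ follows since $\upsilon X$ is $C$-embedded in $\beta X$ and contains the $C$-embedded dense set $X$ whose closure properties force $Z\subseteq\upsilon X$. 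Hence $Z=\upsilon X$.

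The main obstacle I anticipate is the bookkeeping in the last step: one must be careful that the continuous extensions $\widehat f$ of functions on $X$ genuinely identify $Z$ with the subspace of $\beta X$ consisting of points at which every $f\in C(X)$ has a finite limit, rather than merely showing $Z$ is some realcompact space between $X$ and $\beta X$. The factorizing property is exactly what makes this work — it guarantees that the functions $f_\alpha\circ\widehat p_\alpha$ exhaust $C(Z)$ up to the identification $C(X)\cong C(Z)$ — so the key point to get right is that $X$ being $C$-embedded in $Z$ is not just an inclusion $C(X)\hookleftarrow C(Z)$ of restriction but an isomorphism, which is immediate once factorization is invoked. Everything else is standard realcompactness theory.
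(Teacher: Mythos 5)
Your proof is correct and follows essentially the same route as the paper, whose (one-line) argument is exactly that the factorizing property makes $X$ $C$-embedded in the realcompact space $\displaystyle\lim_\leftarrow S$, whence $vX=\displaystyle\lim_\leftarrow S$. The only remark is that your final assembly inside $\beta X$ can be compressed by invoking the standard uniqueness of the Hewitt realcompactification: any realcompact space containing $X$ as a dense $C$-embedded subspace is $vX$.
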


\begin{proof}
Because $S$ is factorizing, $\displaystyle\lim_\leftarrow S$ is a
realcompact space in which $X$ is $C$-embedded. Hence, $v
X=\displaystyle\lim_\leftarrow S$.
\end{proof}

\begin{pro}\label{embfact} Let $\displaystyle S_X=\{X_\alpha, p^{\beta}_\alpha, A\}$ be a factorizing almost $\sigma$-continuous spectrum
and $X=\mathrm{a}-\displaystyle\lim_\leftarrow S$. If $Y\subset X$
is a co-zero set, then $\displaystyle S_Y=\{Y_\alpha=p_\alpha(Y),
p^{\beta}_\alpha|_{Y_\beta}, A\}$ is a factorizing almost
$\sigma$-continuous spectrum.
\end{pro}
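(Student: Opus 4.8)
The plan is to verify directly that $S_Y$ satisfies the three defining conditions of a factorizing almost $\sigma$-continuous spectrum, using the fact that $Y$ is a co-zero set and the corresponding statements already available for $S_X$. First I would record the elementary observations: the bonding maps $p^\beta_\alpha|_{Y_\beta}:Y_\beta\to Y_\alpha$ are surjective by definition of $Y_\alpha=p_\alpha(Y)$, and the index set $A$ is the same as that of $S_X$, hence still $\sigma$-complete. So the only nontrivial points are the almost $\sigma$-continuity and the factorizing property.

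For almost $\sigma$-continuity, fix an increasing sequence $\{\alpha_n\}\subset A$ with $\beta=\sup\{\alpha_n\}$. We must show $Y_\beta$ embeds densely into $\displaystyle\lim_\leftarrow\{Y_{\alpha_n},p^{\alpha_{n+1}}_{\alpha_n},n\ge 1\}$. The inclusion $Y_\beta\hookrightarrow\lim_\leftarrow\{X_{\alpha_n},\dots\}$ is inherited from $X_\beta$, and the image lands inside $\lim_\leftarrow\{Y_{\alpha_n},\dots\}$ since $p_{\alpha_n}(y)\in Y_{\alpha_n}$ for $y\in Y$; injectivity is automatic. For density I would argue that $Y=p_\beta^{-1}(Y_\beta)$ is not generally true, so instead I use that $Y$ is co-zero in $X$ together with the factorizing property of $S_X$: write $Y=X\setminus Z(h)$ for a continuous $h$ on $X$, factor $h=h_\gamma\circ p_\gamma$ through some $X_\gamma$, pass to an $\alpha_n\geq\gamma$, and observe that $Y_\beta$ then equals $p^{-1}_\beta$ of an open set determined at a finite stage; the key is that a basic open neighborhood in $\lim_\leftarrow\{Y_{\alpha_n},\dots\}$ of a point $z$ meets $Y_\beta$ because the corresponding neighborhood in $\lim_\leftarrow\{X_{\alpha_n},\dots\}$ meets $X_\beta$ (by almost $\sigma$-continuity of $S_X$) at a point which, lying over the co-zero condition, is forced into $Y$.

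For the factorizing property, let $g$ be a continuous real-valued function on $Y$. Since $Y$ is a co-zero subset of $X$, I would like to know that $g$ extends (or at least factors) appropriately; the cleanest route is: $Y$ being co-zero in $X$ is $C$-embedded in its intersection with suitable sets, but more directly, $g\cdot\chi$ for an appropriate bump function, or rather: express $g$ via functions of the form $g=\sum g_k$ or use that on a co-zero set $Y=\bigcup_i F_i$ with $F_i$ zero-sets in $X$; on each such piece factor through $S_X$ using that $S_X$ is factorizing, then recombine. More efficiently, since $X=\mathrm{a}\text{-}\lim S_X$ with $S_X$ factorizing, $\lim_\leftarrow S_X=vX$ (Proposition~\ref{FS} in spirit), every continuous function on $Y$ extends over the co-zero set's closure appropriately and factors at some stage $\alpha$; then $g=g_\alpha\circ(p_\alpha|_Y)$ with $g_\alpha$ continuous on $Y_\alpha$.

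The main obstacle I expect is precisely the density clause in almost $\sigma$-continuity: one must show that passing to the subspaces $Y_\alpha$ does not destroy the density of $Y_\beta$ in the countable inverse limit, and this is exactly where the hypothesis that $Y$ is co-zero (rather than an arbitrary subset) is indispensable — a basic cylindrical open set in $\lim_\leftarrow\{Y_{\alpha_n}\}$ must be shown to meet $Y_\beta$, which I would reduce, via the factorization of the defining function of $Y$ through some finite stage, to the already-known density of $X_\beta$ in $\lim_\leftarrow\{X_{\alpha_n}\}$. Once that reduction is set up carefully, the factorizing property follows by a parallel argument factoring the defining co-zero function and the given test function simultaneously through a common stage, and the remaining axioms are immediate.
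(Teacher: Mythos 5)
Your allocation of effort is inverted relative to where the actual difficulty lies. The almost $\sigma$-continuity of $S_Y$, which you single out as ``the main obstacle,'' is in fact immediate and the paper dismisses it in one line: given an increasing sequence $\{\alpha_n\}$ with $\beta=\sup\{\alpha_n\}$, the set $Y_\beta$ embeds into $\displaystyle\lim_\leftarrow\{Y_{\alpha_n}\}$ because $X_\beta$ embeds into $\displaystyle\lim_\leftarrow\{X_{\alpha_n}\}$, and density follows from surjectivity alone: a nonempty basic open set reduces to $q_n^{-1}(V)$ with $V\subset Y_{\alpha_n}$ open and nonempty, any $y\in Y$ with $p_{\alpha_n}(y)\in V$ gives $p_\beta(y)\in Y_\beta\cap q_n^{-1}(V)$. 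This works for an \emph{arbitrary} subset $Y\subset X$, so your assertion that the co-zero hypothesis is ``indispensable'' precisely here is mistaken; that hypothesis is needed only for the factorizing property.

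And it is on the factorizing property that your proposal has a genuine gap: you list several possible routes (extend $g$ over the closure; decompose $Y$ into zero-sets and recombine; invoke $\lim_\leftarrow S_X=vX$) but carry out none of them, and the first is simply unavailable --- a continuous function on a co-zero set need not extend to $X$, nor even be bounded near the boundary (take $Y=g^{-1}((0,1])$ and $f=1/g$), so ``every continuous function on $Y$ extends \dots and factors at some stage'' begs the whole question. The zero-set decomposition faces the same obstruction, since the factorizing hypothesis on $S_X$ only applies to functions defined on all of $X$. The paper's proof supplies the missing mechanism explicitly: it approximates $f$ by \emph{globally defined} continuous functions $f_n=f'_n\cdot g_n$, where $f'_n$ truncates $f$ at $\pm n$ and $g_n=\min\{n\cdot g,1\}$ damps it to $0$ off $Y$; each $f_n$ factors as $h_n\circ p_{\alpha_n}$, and at $\beta=\sup\alpha_n$ the pointwise limit yields a well-defined $h_\beta$ on $Y_\beta$ with $f=h_\beta\circ(p_\beta|_Y)$. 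The remaining (and hardest) step, entirely absent from your sketch, is the continuity of $h_\beta$: around each $y\in Y$ one must exhibit a neighborhood of the form $p_{\alpha_m}^{-1}(W)$, with $W\subset(g_\gamma\circ p^{\alpha_m}_\gamma)^{-1}((\tfrac1m,1])\cap h_m^{-1}((-k,k))$, on which $f_m$ coincides with $f$, so that the oscillation of $h_\beta$ is controlled by that of $h_m$ at the finite stage $\alpha_m$. Without this construction and the continuity argument, the factorization claim is unproved.
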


\begin{proof}
Obviously, $S_Y$ is almost $\sigma$-continuous, so we need to show
it is also factorizing. Suppose $Y=g^{-1}((0,1])$ for some
continuous function $g$ on $X$. Since $S_X$ is factorizing, there
exists $\gamma\in A$ and a continuous function $g_\gamma:
X_\gamma\to\mathbb R$ with $g=g_\gamma\circ p_\gamma$.

Let $f: Y\to\mathbb R$ be a continuous function. Put
$$f_n(x)=\left\{\begin{array}{cc}
f'_n(x)\cdot g_n(x), & x\in Y\\
0, & x\not\in Y,\\
\end{array}\right.$$

where
$$f'_n(y)=\left\{\begin{array}{cc}
\min\{f(y),n\}, & f(y)\geq 0\\
\max\{f(y),-n\}, & f(y)< 0,\\
\end{array}
\right.$$ and $g_n(x)=\min\{n\cdot g(x), 1\}$, $x\in X$.

The sequence $\{f_n\}$ consists of continuous functions on $X$ such
that $\lim_{n\to\infty}f_n(y)=f(y)$ for any $y\in Y$. Because
spectrum $S_X$ is factorizing, we can construct by induction a
sequence $\{\alpha_n\}\subset A$ and continuous functions  $h_n:
X_{\alpha_n}\to\mathbb R$ such that
$\alpha_{n+1}\geq\alpha_n>\gamma$ and $f_n=h_n\circ p_{\alpha_n}$.
Let $\beta=\sup\{\alpha_n: n\in\mathbb N\}$. Then for any points $y,
y'\in Y$ such that $p_\beta(y)=p_\beta(y')$ we have
$$\lim_{n\to\infty}f_n(y)=\lim_{n\to\infty}h_n\circ
p_{\alpha_n}(y)=\lim_{n\to\infty}h_n\circ
p_{\alpha_n}(y')=\lim_{n\to\infty}f_n(y'),$$ which implies
$f(y)=f(y')$. Therefore, there is a map $h_\beta: Y_\beta\to\mathbb
R$ such that $f=h_\beta\circ (p_{\beta}|_{Y})$. It remains to show
$h_\beta$ is continuous. Suppose $p_\beta (y)=z$ for some $z\in
Y_\beta$ and $y\in Y$. Then there is $k$ such that $g_n(y)=1$ and
$f_n(y)=f'_n(y)=f(y)\in (-k,k)$ for all $n\geq k$. Fix $m>k$ and
$\varepsilon>0$, and let $y_m=p_{\alpha_m}(y)$. So,
$f_m(y)=h_m(p_{\alpha_m}(y))\in (-k,k)$ and $y\in g^{-1}((\frac1{m},
1])$. Since $\alpha_m>\gamma$, there is a map
$p^{\alpha_m}_\gamma:X_{\alpha_m}\to X_\gamma$ with
$p_\gamma=p^{\alpha_m}_\gamma\circ p_{\alpha_m}$. Then
$g_\gamma\big(p^{\alpha_m}_\gamma(y_m)\big)=g(y)\in (\frac1{m}, 1]$.
Consequently, there is a neighborhood $W\subset X_{\alpha_m}$ of
$y_m$ such that $W\subset (g_\gamma\circ
p^{\alpha_m}_\gamma)^{-1}((\frac1{m}, 1])\cap h_m^{-1}((-k,k))$ and
$|h_m(t)-h_m(y_m)|<\varepsilon$ for every $t\in W$. Since $g_m(x)=1$
and $f_m(x)\in (-k,k)$ for all $x\in p_{\alpha_m}^{-1}(W)$,
$p_{\alpha_m}^{-1}(W)\subset f^{-1}((-m,m))$. Therefore, if
$t=p_{\alpha_m}(x)$, where $x\in p_{\alpha_m}^{-1}(W)$, we have
$$\varepsilon>|h_{m}(t)-h_{m}\circ p_{\alpha_m}(y)|=|h_{m}\circ p_{\alpha_m}(x)-h_{m}\circ
p_{\alpha_m}(y)|=$$
$$=|f_m(x)-f_m(y)|=|f(x)-f(y)|.$$
The last inequality implies $$|h_\beta(z')-h_\beta(z)|=|h_\beta\circ
p_\beta(x)-h_\beta\circ p_\beta(y)|=|f(x)-f(y)|<\varepsilon$$ for
all $z'=p_\beta(x)\in (p^\beta_{\alpha_m})^{-1}(W)$. Thus, $h_\beta$
is continuous.
\end{proof}

\subsection{Spectral representations of $\mathbb R$-factorizable
groups}

If $G$ is an $\mathbb R$-factorizable group, then the family of all
continuous homomorphisms from $G$ onto second-countable groups is a
$\sigma$-lattice. So, proposition~\ref{lemmain} yields that if $G$
is an $\mathbb R$-factorizable group, then every factorizing almost
continuous $\sigma$-spectrum $S_G$ with $\displaystyle G=\rm
a-\underleftarrow{\lim} S_G$ has a cofinal almost continuous
$\sigma$-subspectrum consisting of groups and homomorphisms.

Next theorem implies that if $G$ is an $\mathbb R$-factorizable
group, then any almost $\sigma$-continuous spectrum $S$ of groups
and homomorphisms with $\displaystyle G=\rm a-\underleftarrow{\lim}
S$ is factorizing.

\begin{thm}\label{factsig}
Let $G$ be an $\mathbb R$-factorizable group and $\displaystyle
G=\rm a-\underleftarrow{\lim} S_G$, where $S_G$ is an almost
$\sigma$-continuous spectrum consisting of topological groups and
homomorphisms. Then $S_G$ is factorizing.
\end{thm}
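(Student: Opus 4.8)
The plan is to reduce the assertion to a single factorization property of continuous homomorphisms, and then to amalgamate countably many ``local'' factorizations by exploiting $\sigma$-completeness of the index set. Write $S_G=\{G_\alpha,p^{\beta}_\alpha,A\}$, let $p_\alpha\colon\lim_{\leftarrow}S_G\to G_\alpha$ be the limit projections, and regard $G$ as a topological subgroup of $\lim_{\leftarrow}S_G$ with $p_\alpha(G)=G_\alpha$ for every $\alpha$ (this is what $G=\mathrm{a}-\lim_{\leftarrow}S_G$ means). Fix a continuous real-valued function $f$ on $G$. Since $G$ is $\mathbb R$-factorizable, there are a continuous homomorphism $\varphi\colon G\to K$ onto a second-countable topological group $K$ and a continuous function $h$ on $K$ with $f=h\circ\varphi$. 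Hence it is enough to find $\alpha\in A$ and a continuous homomorphism $\psi\colon G_\alpha\to K$ with $\varphi=\psi\circ p_\alpha$: then $f=(h\circ\psi)\circ p_\alpha$, and $h\circ\psi$ is continuous on $G_\alpha$.

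To construct such an $\alpha$, fix a countable base $\{O_n\}_{n\ge 1}$ at the identity $e_K$ of $K$ with $\bigcap_n O_n=\{e_K\}$. The group $G$ carries the subspace topology induced from $\prod_\alpha G_\alpha$ and $A$ is directed, so the sets $p_\alpha^{-1}(V)\cap G$ (with $\alpha\in A$ and $V$ open in $G_\alpha$) form a base of $G$. Consequently each open set $\varphi^{-1}(O_n)\ni e$ contains a set of this form, and, choosing the indices inductively and using directedness of $A$, I would produce an increasing sequence $\beta_1\le\beta_2\le\cdots$ in $A$ together with open sets $V_n\ni e_{G_{\beta_n}}$ in $G_{\beta_n}$ such that $p_{\beta_n}^{-1}(V_n)\cap G\subseteq\varphi^{-1}(O_n)$ for all $n$. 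Let $\beta=\sup_n\beta_n$, which lies in $A$ by $\sigma$-completeness, and set $V_n'=(p^{\beta}_{\beta_n})^{-1}(V_n)$; then $V_n'$ is an open neighbourhood of $e_{G_\beta}$ and, since $p_{\beta_n}=p^{\beta}_{\beta_n}\circ p_\beta$, we get $p_\beta^{-1}(V_n')\cap G=p_{\beta_n}^{-1}(V_n)\cap G\subseteq\varphi^{-1}(O_n)$ for every $n$.

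Now $\beta$ works. If $g\in\ker(p_\beta|_G)$, then $p_\beta(g)=e_{G_\beta}\in V_n'$, hence $g\in p_\beta^{-1}(V_n')\cap G\subseteq\varphi^{-1}(O_n)$ for every $n$, so $\varphi(g)\in\bigcap_n O_n=\{e_K\}$; thus $\ker(p_\beta|_G)\subseteq\ker\varphi$, and since $p_\beta|_G\colon G\to G_\beta$ is a surjective homomorphism there is a unique homomorphism $\psi\colon G_\beta\to K$ with $\varphi=\psi\circ(p_\beta|_G)$. Because $p_\beta|_G$ maps $G$ onto $G_\beta$, applying $p_\beta|_G$ to the inclusion $p_\beta^{-1}(V_n')\cap G\subseteq\varphi^{-1}(O_n)=p_\beta^{-1}(\psi^{-1}(O_n))\cap G$ gives $V_n'\subseteq\psi^{-1}(O_n)$, i.e.\ $\psi(V_n')\subseteq O_n$; so $\psi$ is continuous at $e_{G_\beta}$, hence continuous, being a homomorphism of topological groups. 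Therefore $f=(h\circ\psi)\circ p_\beta$, and since $f$ was arbitrary, $S_G$ is factorizing.

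Most of this is routine bookkeeping once the reduction via $\mathbb R$-factorizability is in place; note that $\mathbb R$-factorizability is genuinely needed, as the analogous statement fails for arbitrary $\omega$-narrow groups. The step I expect to demand the most care is the passage to the single index $\beta=\sup_n\beta_n$: one must arrange the neighbourhoods $V_n$ so that, after pulling them back along $p^{\beta}_{\beta_n}$, they become genuine open neighbourhoods of the identity in the \emph{one} group $G_\beta$ without weakening the inclusions $p_\beta^{-1}(V_n')\cap G\subseteq\varphi^{-1}(O_n)$, and then to verify continuity of the induced $\psi$ — both rest on $\sigma$-completeness of $A$ together with surjectivity of the bonding and limit maps.
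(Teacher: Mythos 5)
Your argument is correct, and it reaches the factorization by a route that genuinely differs from the paper's at the two points where real work is done. The paper never factors $f$ through a homomorphism: it invokes instead the $\omega$-$U$ property of $\mathbb R$-factorizable groups from \cite{XL}, producing a countable family $\{W_k\}$ of identity neighbourhoods with $W_k=W_k^{-1}$ and $W_{k+1}^2\subset W_k$ that controls the oscillation of $f$, then performs the same amalgamation you do (choose $\alpha_k$ and $V_{\alpha_k}$ with $p_{\alpha_k}^{-1}(V_{\alpha_k})\subset W_k$ and pass to $\alpha=\sup_k\alpha_k$), concludes that $f$ is constant on the fibres of $p_\alpha$, and finally verifies continuity of the induced $f_\alpha$ by a direct $\varepsilon$-estimate using the $W_k$. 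You replace the appeal to $\omega$-$U$ by the bare definition of $\mathbb R$-factorizability, so the object being factored through $p_\beta$ is a homomorphism $\varphi$ onto a second-countable group rather than a real-valued function; the factorization then reduces to the kernel inclusion $\ker(p_\beta|_G)\subset\ker\varphi$, and continuity of $\psi$ comes for free from continuity at the identity. This makes your proof more self-contained (no citation of the uniform-continuity characterization) and renders the continuity check essentially trivial, at the cost of using the group structure of the second-countable target in an essential way; the paper's route works directly with $f$ and would survive in settings where one has an $\omega$-$U$-type property but no homomorphic factorization. Two small points you should make explicit in a final write-up: the sets $p_\alpha^{-1}(V)\cap G$ form a base of $G$ because the index set of an inverse system is directed (this is where finitely many coordinates are absorbed into one index), and the hypothesis $G=\mathrm{a}-\lim_\leftarrow S_G$ with $S_G$ consisting of groups and homomorphisms is to be read as saying that $G$ sits in $\lim_\leftarrow S_G$ as a topological \emph{subgroup}, so that each $p_\beta|_G$ really is a surjective homomorphism onto $G_\beta$ --- you assume this at the outset, and the paper's proof tacitly relies on the same reading.
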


\begin{proof} Since $G$ is $\mathbb R$-factorizable, it has the property $\omega$-$U$~\cite{XL}: for any continuous function
$f:G\to\mathbb R$ and for any $\varepsilon>0$ there exists a
countable family $\mathcal U(f,\varepsilon)=\{U_n\in N_G(e):
n\in\mathbb N\}$ such that for every $x\in G$, there exists $U_n$
such that $|f(x)-f(y)|<\varepsilon$ whenever $y\in U_nx$. Then
$\mathcal W(f)=\bigcup_{k\geq 1}\mathcal U(f,1/k)$ is also a
countable family. Without loss of generality we may assume that each
family $\mathcal W(f)=\{W_k:k\geq 1\}$ satisfies the additional
condition $W_k=W_k^{-1}$ and $W_{k+1}^2\subset W_k$.

Suppose $\displaystyle S_G=\{G_\alpha, p^{\beta}_\alpha, A\}$ and
$f$ is a continuous function on $G$. We construct an increasing
sequence $\{\alpha_k\}\subset A$ and neighborhoods $V_{\alpha_k}$ of
the identities $\rm e_{\alpha_k}$ of $G_{\alpha_k}$ respectively,
such that $p_{\alpha_k}^{-1}(V_{\alpha_k})\subset W_k$, $W_k\in
\mathcal W(f)$. Let $\alpha=\sup\{\alpha_k\}$. Since
$p_{\alpha}^{-1}(\rm e_\alpha)\subset\bigcap_{k\geq 1}W_k$, we have
$p_\alpha(x)=p_\alpha(y)$ iff $y\in\bigcap\{W_kx: k\in\mathbb N\}$.
Hence, $p_\alpha(x)=p_\alpha(y)$ implies $f(x)=f(y)$. Therefore, we
can define a map $f_\alpha: G_\alpha\to\mathbb R$ such that
$f_\alpha\circ p_\alpha=f$.

To finish the proof it remains to show that $f_\alpha$ is
continuous. To this end, let $x\in G_\alpha$ and $x'\in
p_\alpha^{-1}(x)$. For any $\varepsilon>0$ there exists
$W_k\in\mathcal W(f)$ such that $|f(x')-f(y)|<\varepsilon$ for every
$y\in W_kx'$. Then $(p_{\alpha_k}^\alpha)^{-1}(V_{\alpha_k})x$ is a
neighborhood  of $x$ in $G_\alpha$ and
$|f_\alpha(x)-f_\alpha(y)|<\varepsilon$ for every $y\in
(p_{\alpha_k}^\alpha)^{-1}(V_{\alpha_k})x$. Hence, $f_\alpha$ is
continuous at $x$.
\end{proof}

\begin{cor}\label{RI} If an $\mathbb R$-factorizable group $G$ has
a system $L$ of $($open, resp., nearly open$)$ homomorphisms onto
second-countable topological groups such that $L$ satisfies
condition $(L1)$ and generates the topology of $G$, then $L$ is a
$\sigma$-lattice of $($open, resp., nearly open$)$ homomorphisms.
\end{cor}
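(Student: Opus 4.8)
The plan is to realise $L$ as (the family of projections of) an almost continuous $\sigma$-spectrum, invoke Theorem~\ref{factsig} to see that this spectrum is factorizing, and then read condition $(L2)$ off. Since $(L1)$ and the (near-)openness of the members of $L$ are already part of the hypothesis, only $(L2)$ has to be verified.

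First I would build the spectrum. Order $L$ by $f<g\iff g\prec f$; then $(L1)$ says precisely that $(L,<)$ is $\sigma$-complete, the diagonal product of a $\prec$-decreasing sequence being its supremum. As $L$ is a directed system of maps onto second-countable spaces which generates the topology of $G$ and satisfies $(L1)$, Proposition~\ref{sp-lat} produces an almost continuous $\sigma$-spectrum $S_L=\{f(G),p^g_f,L\}$ with $G=\mathrm{a}-\lim_{\leftarrow}S_L$. Because each $f\in L$ is a surjective homomorphism, every $f(G)$ is a second-countable topological group and, by uniqueness of the bonding maps, every $p^g_f\colon g(G)\to f(G)$ is a continuous surjective homomorphism; thus $S_L$ is an almost $\sigma$-continuous spectrum of topological groups and homomorphisms. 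Now $G$ is $\mathbb R$-factorizable, so Theorem~\ref{factsig} applies and $S_L$ is factorizing: every continuous real-valued function on $G$ factors through some $f\in L$.

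It remains to deduce $(L2)$. Let $\phi\colon G\to M$ be a map onto a space $M$ of countable weight; being Tychonoff and second countable, $M$ is separable metrizable, so fix a countable family $\{h_k\}$ of continuous real-valued functions on $M$ for which $(h_k)_k\colon M\to\mathbb R^{\omega}$ is an embedding. Each $h_k\circ\phi$ is continuous on $G$, hence (as $S_L$ is factorizing) factors as $h_k\circ\phi=\psi_k\circ f_k$ with $f_k\in L$. Using that $L$ is directed, choose inductively $\tau_n\in L$ with $\tau_n\prec f_k$ for all $k\le n$ and $\tau_{n+1}\prec\tau_n$; then $\varphi:=\triangle_{n\ge 1}\tau_n\in L$ by $(L1)$ and $\varphi\prec f_k$ for every $k$. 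Consequently $\varphi(x)=\varphi(y)$ forces $h_k(\phi(x))=h_k(\phi(y))$ for all $k$, hence $\phi(x)=\phi(y)$, so $\phi=h\circ\varphi$ for a uniquely determined $h\colon\varphi(G)\to M$, which is continuous because $(h_k)_k\circ h$ is continuous and $(h_k)_k$ is an embedding. Thus $\varphi\prec\phi$, i.e.\ $(L2)$ holds; together with $(L1)$ this makes $L$ a $\sigma$-lattice, and by hypothesis it consists of open (resp.\ nearly-open) homomorphisms.

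The only genuine work is in the last paragraph: converting a map into a second-countable space into countably many scalar factorizations and amalgamating the resulting members of $L$ into a single one. This amalgamation uses that $L$, being a system, is directed, so it goes beyond the bare sequential closure $(L1)$; everything else is a direct appeal to Proposition~\ref{sp-lat} and Theorem~\ref{factsig}.
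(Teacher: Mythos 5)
Your proof is correct and follows exactly the route the paper intends: the paper states this corollary without proof immediately after Theorem~\ref{factsig}, and the chain you use (Proposition~\ref{sp-lat} to build the spectrum $S_L$, Theorem~\ref{factsig} to see it is factorizing, then countably many scalar factorizations amalgamated via directedness and $(L1)$ to get $(L2)$) is precisely the argument the paper itself runs in Example 3.9.4. Your explicit observation that directedness of $L$ is being used beyond the bare hypothesis $(L1)$ is a fair and accurate reading of what the word ``system'' must mean here.
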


If an $\mathbb R$-factorizable group $G$ is a subgroup of a product
of second-countable groups, then the images of the restrictions to
$G$ of all projections into countable subproducts form an almost
continuous $\sigma$-spectrum. Therefore we have the following
corollary.

\begin{cor}  A topological group $G$ is $\mathbb R$-factorizable iff for any topological isomorphism of $G$ into a product
$\Pi=\prod_{\alpha\in A}G_\alpha$ of second-countable groups the
family $\{p_B: B\subset A\ \mbox{is countable}\}$, where $p_B:
G\to\prod_{\alpha\in B}G_\alpha$ denotes the projection, is a
$\sigma$-lattice on $G$.
\end{cor}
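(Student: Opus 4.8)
The plan is to establish both implications, with Proposition~\ref{sp-lat} and Theorem~\ref{factsig} carrying the weight of the argument. Suppose first that $G$ is $\mathbb R$-factorizable. Then $G$ is $\omega$-narrow \cite[Proposition 8.1.3]{ArTk}, so it embeds as a subgroup of some product $\Pi=\prod_{\alpha\in A}G_\alpha$ of second-countable groups; I fix an arbitrary such embedding and put $\mathcal L=\{p_B:B\subset A\ \text{countable}\}$, where $p_B$ is the restriction to $G$ of the projection onto $\prod_{\alpha\in B}G_\alpha$. Each $p_B$ is a continuous homomorphism, and $p_B(G)$, as a subgroup of a countable product of second-countable groups, is a second-countable topological group. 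Routine bookkeeping shows that $\mathcal L$ is directed by $\prec$ (for countable $B_1,B_2$ one has $p_{B_1\cup B_2}\prec p_{B_1},p_{B_2}$), that $\mathcal L$ generates the topology of $G$ (basic open subsets of $\Pi$ depend on finitely many coordinates), and that $\mathcal L$ satisfies $(L1)$: if $p_{B_{n+1}}\prec p_{B_n}$ for all $n$, then $\triangle_n p_{B_n}$ is equivalent to $p_{\bigcup_n B_n}\in\mathcal L$, since each of these two maps factors through the other by reading off coordinates. Hence Proposition~\ref{sp-lat} applies: $S_{\mathcal L}$ is an almost continuous $\sigma$-spectrum of groups and continuous homomorphisms with $G=\mathrm{a}-\lim_\leftarrow S_{\mathcal L}$, and, $G$ being $\mathbb R$-factorizable, Theorem~\ref{factsig} gives that $S_{\mathcal L}$ is factorizing.

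It remains, still in the necessity direction, to upgrade ``factorizing'' to condition $(L2)$. Given a map $f\colon G\to f(G)$ with $f(G)$ of countable weight, I would choose a countable family $\{h_k\}$ of continuous real-valued functions on $f(G)$ separating points and generating its topology. Since $S_{\mathcal L}$ is factorizing, each $h_k\circ f$ factors through some $p_{B_k}\in\mathcal L$, and directedness together with $(L1)$ then produces a single $\varphi\in\mathcal L$ with $\varphi\prec p_{B_k}$ for all $k$; consequently every $h_k\circ f$, and hence $f$ itself, factors through $\varphi$. Thus $\varphi\prec f$, so $(L2)$ holds and $\mathcal L$ is a $\sigma$-lattice on $G$.

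For sufficiency, assume that for every embedding of $G$ into a product of second-countable groups the associated family $\mathcal L$ is a $\sigma$-lattice (in the only nontrivial case such an embedding exists, i.e.\ $G$ is $\omega$-narrow); fix one such embedding $G\subset\Pi=\prod_{\alpha\in A}G_\alpha$ together with its $\sigma$-lattice $\mathcal L=\{p_B\}$. Let $f\colon G\to\mathbb R$ be continuous. Then $f(G)$ is second-countable, so condition $(L2)$ supplies $p_B\in\mathcal L$ with $p_B\prec f$, i.e.\ $f=h\circ p_B$ for some continuous $h$. Since $p_B$ is a continuous homomorphism of $G$ onto the second-countable group $p_B(G)$, this is precisely the factorization demanded by the definition of $\mathbb R$-factorizability, so $G$ is $\mathbb R$-factorizable.

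I expect the only genuine difficulty to lie in the necessity direction, and it is already isolated in Theorem~\ref{factsig}: the substantive point is that the spectrum of projections of $G$ onto its countable subproducts is factorizing whenever $G$ is $\mathbb R$-factorizable. Once that is available, everything else (directedness, $(L1)$, and the deduction of $(L2)$ from factorizability) is formal manipulation with diagonal products and with countable separating families of continuous functions.
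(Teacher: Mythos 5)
Your proposal is correct and follows essentially the same route as the paper, which derives the corollary from the observation that the projections onto countable subproducts form an almost continuous $\sigma$-spectrum of groups and homomorphisms together with Theorem~\ref{factsig}; you merely spell out the routine upgrade from ``factorizing'' to condition $(L2)$ and the easy converse, both of which the paper leaves implicit.
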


It is worth noting that any $\omega$-narrow not $\mathbb
R$-factorizable group is an almost limit of an almost continuous
$\sigma$-spectrum of second-countable groups and homomorphisms, but
no such a spectrum is factorizing. While $\mathbb R$-factorizable
groups can be characterized as those groups which are almost limits
of almost continuous $\sigma$-spectra of second-countable groups and
homomorphisms, and any such spectrum is factorizing.

\begin{cor}\label{vR} If $G$ is an $\mathbb R$-factorizable group and $\displaystyle G=\rm a-\underleftarrow{\lim} S_G$, where $S_G$ is an almost continuous $\sigma$-spectrum consisting of groups and
homomorphisms, then $\underleftarrow{\lim} S_G$ is homeomorphic to
the realcompactification of $G$.
\end{cor}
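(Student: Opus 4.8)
The plan is to deduce the statement by combining the two earlier results Theorem~\ref{factsig} and Proposition~\ref{FS}. First I would record the elementary observation that an almost continuous $\sigma$-spectrum is, in particular, an almost $\sigma$-continuous spectrum: the only extra requirement in the former notion is that all spaces $X_\alpha$ be second-countable, so the three defining conditions of an almost $\sigma$-continuous spectrum (surjective bonding maps, $\sigma$-complete index set, density into the limit of any countable chain) are automatically satisfied by $S_G$. Hence $S_G$ is an almost $\sigma$-continuous spectrum consisting of topological groups and continuous homomorphisms with $G=\mathrm{a}-\underleftarrow{\lim}\,S_G$.

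Next, since $G$ is $\mathbb R$-factorizable, Theorem~\ref{factsig} applies to $S_G$ and yields that $S_G$ is factorizing. Thus $S_G$ is now a \emph{factorizing} almost continuous $\sigma$-spectrum with $G=\mathrm{a}-\underleftarrow{\lim}\,S_G$, which is exactly the hypothesis of Proposition~\ref{FS}. Applying that proposition gives $\upsilon G=\underleftarrow{\lim}\,S_G$ (via the canonical $C$-embedding of $G$ into the realcompact space $\underleftarrow{\lim}\,S_G$); in particular $\underleftarrow{\lim}\,S_G$ is homeomorphic to the realcompactification of $G$, which is what is claimed.

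There is essentially no obstacle in this argument: both ingredients are already in hand, and the only point that needs to be made explicit is the remark above that ``almost continuous $\sigma$-spectrum'' is a special case of ``almost $\sigma$-continuous spectrum,'' so that Theorem~\ref{factsig} is indeed applicable in the present setting. If anything required care, it would be keeping the two similarly-named spectrum notions straight and confirming that the groups-and-homomorphisms hypothesis of Theorem~\ref{factsig} matches the one assumed here, which it does verbatim.
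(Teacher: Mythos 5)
Your proposal matches the paper's proof exactly: the paper also applies Theorem~\ref{factsig} to conclude that $S_G$ is factorizing and then invokes Proposition~\ref{FS}. Your additional remark that an almost continuous $\sigma$-spectrum is a special case of an almost $\sigma$-continuous spectrum is a correct and worthwhile clarification, but the argument is the same.
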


\begin{proof}
By theorem~\ref{factsig}, $S_G$ is a factorizing almost continuous
$\sigma$-spectrum. The rest follows from proposition~\ref{FS}.
\end{proof}

Corollary~\ref{vR} improves \cite[theorem 2.22]{k2017} stating that
if $G$ is an $\mathbb R$-factorizable group, then $vG$ is the limit
of a factorizing almost continuous $\sigma$-spectrum of
second-countable groups and homomorphisms.

\smallskip

\textbf{Acknowledgments.} The authors would like to express their
gratitude to M.~Tkachenko and A.~Kucharski for several discussions.

\bibliographystyle{amsplain}

\end{document}